\newtheorem{theorem}{Theorem}
\newtheorem{lemma}{Lemma}
\newtheorem{definition}{Definition}
\theoremstyle{remark}
\newtheorem{remark}{Remark}
\theoremstyle{definition}
\newtheorem{assumption}{Assumption}
\newenvironment{enum_A}
  {%
  \setlength{\leftmargini}{4em}\begin{enumerate}}
  {\end{enumerate}}
\newenvironment{enum_W}
  {%
  \setlength{\leftmargini}{4em}\begin{enumerate}}
  {\end{enumerate}}
\def\maxscale{J}
\def\lowscale{L}
\def\upscale{U}
\def\Sclt{{\mathbf{\widehat{S}}}}
\newcommand\jmean[1]{\langle #1 \rangle}
\def\dj{u}
\def\dk{v}
\def\rmd{\mathrm{d}}
\def\rme{\mathrm{e}}
\def\rmi{\mathrm{i}}
\def\cl{\mathop{\stackrel{\mathcal{L}}{\longrightarrow}}}
\def\hd{\hat{d}} 
\def\dwt{W}
\def\indexset{\mathcal{I}}
\def\L{\mathrm{T}}
\def\be{\mathbf{e}}
\def\diffop{\mathbf{\Delta}}
\def\1{\mathbbm{1}}
\def\bX{\mathbf{X}}
\def\bx{\mathbf{x}}
\def\Nset{\mathbb{N}} 
\def\Zset{\mathbb{Z}}
\def\Rset{\mathbb{R}} 
\def\PE{\mathbb{E}} 
\def\PVar{\mathrm{Var}}
\def\PCov{\mathrm{Cov}}
\def\calN{\mathcal{N}}
\def\ie{\textit{i.e.} }
\def\argmin{\mathop{\mathrm{Argmin}}}
\def\prob{\mathbb{P}}
\newcommand{\eqdef}{\ensuremath{\stackrel{\mathrm{def}}{=}}}
\newcommand{\eqsp}{\;}
\newcommand{\AVvar}[3][]
{
\ifthenelse{\equal{#1}{}}{\mathbf{V}_{#3}(#2)}{\mathbf{V}_{#3}(#2,#1)}}
\newcommand{\AVvarJoint}[3][]
{
\ifthenelse{\equal{#1}{}}{\mathbf{\Lambda}_{#3}(#2)}{\mathbf{\Lambda}_{#3}(#2,#1)}}
\newcommand{\AsympVarWWE}[2][]
{\rho^2(#2)}
\newcommand{\AVvarInv}[3][]
{\ifthenelse{\equal{#1}{}}{\mathbf{V}^{-1}_{#3}(#2)}{\mathbf{V}^{-1}_{#3}(#2,#1)}}
\newcommand{\sigmaasymp}[2][]
{
\ifthenelse{\equal{#1}{}}{\sigma(#2)}{\sigma(#2,#1)}}
\def\vjsymb{\sigma}
\newcommand{\vj}[4][]{%
\ifthenelse{\equal{#1}{}}{\vjsymb^{#4}_{#2}}{\vjsymb^{#4}_{#2}(#3,#1)}}
\newcommand{\stdj}[3][]{%
\ifthenelse{\equal{#1}{}}{\vjsymb_{#2}}{\vjsymb_{#2}(#3,#1)}}
\newcommand{\hvj}[3][]{%
\ifthenelse{\equal{#1}{}}{\hat{\vjsymb}^2_{#2}}{\hat{\vjsymb}^2_{#2}(#1)}}
\newcommand{\hvjLIM}[3][]{%
\ifthenelse{\equal{#1}{}}{Q^{(#2)}_{#3}}{Q^{(#2)}_{#3}(#1)}}
\newcommand{\Kvar}[2][]{
\mathrm{K}(#2)}
\def\densasympletter{D}
\newcommand{\bdensasymp}[4][]{%
\mathbf{\densasympletter}_{\infty,#2}({#3})}
\def\intbdensletter{I}
\newcommand{\intbdens}[3][]{
\mathrm{\intbdensletter}_{#2}(#3)}
\def\regressweights{\mathbf{w}}
\def\useq{a}
\def\ufonc{a^\ast}
\def\vseq{v}
\def\vfonc{v^\ast}
\def\vfoncsym{w^\ast}
\def\limcons{\mathrm{C}}
\def\vphase{\Phi}
\def\dimV{N}
\def\jmean{\mathcal{J}}
\def\allWA{\ref{item:Wreg}--\ref{item:MIM}}
\def\k{\ell}
\def\decim{\gamma}
\title[Asymptotic normality of wavelet estimators] 
{Asymptotic normality of wavelet estimators of the memory parameter for linear processes} 
\author{F. Roueff}
\address{TELECOM ParisTech, INSTITUT Télécom, CNRS LTCI, 46, rue Barrault, 75634 Paris Cédex 13, France.}
\email{roueff@tsi.enst.fr}
\author{M.S. Taqqu}
\address{Department of Mathematics and Statistics, Boston University Boston, MA 02215, USA.}
\email{murad@math.bu.edu}
\subjclass{Primary 62M10, 62M15, 62G05 Secondary: 60G18.}
\keywords{Spectral analysis, Wavelet analysis, long range dependence, semiparametric estimation.}
\date{May 6, 2008}
\thanks{Murad~S.~Taqqu would like to thank l'\'Ecole Normale Sup\'erieure des T\'elecom\-munications in Paris
for their hospitality.  This research was partially supported by the NSF Grants DMS--0505747 and DMS--0706786 at Boston University.}
\begin{document}

\maketitle

\begin{center}
{\it TELECOM ParisTech and Boston University}\\
\end{center}

\begin{abstract}
We consider linear processes, not necessarily Gaussian, with long, short or negative memory. The memory parameter is
estimated semi-parametrically using wavelets  from a sample $X_1,\dots,X_n$ of the process. We treat both the log-regression wavelet estimator and the wavelet Whittle
estimator. We show that these estimators are asymptotically normal as the sample size $n\to\infty$ and we obtain an explicit expression for the limit
variance. 
These results are derived from a general result on the asymptotic normality of the empirical scalogram for linear
processes, conveniently centered and normalized. The scalogram is an array of quadratic forms of the observed sample,
computed from the wavelet coefficients of this sample. In contrast with quadratic forms computed on the Fourier coefficients
such as the periodogram, the scalogram involves correlations which do not vanish as the sample size $n\to\infty$.
\end{abstract}

\renewcommand{\thefootnote}{}
\footnote{\textit{Corresponding author}: F. Roueff, TELECOM ParisTech, 46, rue Barrault, 75634 Paris Cédex 13, France.}


\tableofcontents

\section{Introduction}
We consider  a real-valued process 
  $X \eqdef \{X_k \}_{k\in\Zset}$, not necessarily stationary and for any positive integer $k$, let $\diffop^k X$ denote its
$k$-th order difference. The first order difference is $[\diffop X]_t \eqdef X_t - X_{t-1}$ and $\diffop^k$ is defined recursively.
\begin{definition}[$M(d)$ processes]
The process $X$ is said to have \emph{memory parameter} $d$, $d \in \Rset$ (in short, is an M($d$) process) 
and \emph{short-range} spectral density $f^\ast$ if for any integer $k>d-1/2$,
the $k$-th order difference process $\diffop^{k}X$ is weakly stationary with spectral density function
\begin{equation}\label{eq:spdelta}
f_{\diffop^k X}(\lambda) \eqdef |1-e^{-i\lambda}|^{2(k-d)}\,f^\ast(\lambda)\quad  \lambda\in(-\pi,\pi),
\end{equation}
where $f^\ast$ is a non-negative symmetric function which is continous and non-zero at the origin.
\end{definition}
M($d$) processes encompass both stationary and non-stationary processes, depending on the value of the memory parameter $d$.
The function
\begin{equation}
\label{eq:SpectralDensity:FractionalProcess}
f(\lambda) = |1 - \rme^{-\rmi \lambda}|^{-2d}  f^\ast(\lambda) 
\end{equation}
is called the \emph{generalized spectral density} of $X$. It is a proper spectral density function when $d < 1/2$. In this case,
the process $X$ is covariance stationary with spectral density function $f$. 
The process $X$ is said to have long-memory if $0 < d < 1/2$, short-memory if $d=0$ and negative memory if $d < 0$; the
process is not invertible if $d < -1/2$.
The factor $f^\ast$ is a nuisance function which determine the ``short-range'' dependence. 

In a typical \textit{semiparametric} estimation setting (see for instance
\cite{robinson:1995g, giraitis:robinson:samarov:1997, moulines:soulier:2001}), the following additional assumption is often considered.
\begin{assumption}
\label{H:Lbeta} There exists $\beta\in(0,2]$, $\gamma>0$ and $\varepsilon\in(0,\pi]$ such that for all
$\lambda\in[-\varepsilon,\varepsilon]$,
\begin{equation}\label{eq:Hbeta}
|f^\ast(\lambda)- f^\ast(0) | \leq L \, f^\ast(0) \, |\lambda|^\beta \eqsp .
\end{equation}
Moreover, $f^\ast(0)>0$.
\end{assumption}

We consider an $M(d)$ process satisfying the following \emph{linear assumption}. 
\begin{assumption}\label{assump:linear}
There exists a non-negative integer $K$ such that
\begin{equation}
  \label{eq:defXlinCase}
  [\diffop^KX]_k=\sum_{t\in\Zset} \useq^{(K)}(k-t) \, \xi_t \;,  
\end{equation}
where    $\{\useq^{(K)}(t),\,t\in\Zset\}$ is a real-valued sequence satisfying $\sum_t(\useq^{(K)}(t))^2<\infty$ and
\begin{enum_A}
\item \label{A:1}
 $\{\xi_l,\,l\in\Zset\}$ is a sequence of independent and identically distributed  real-valued
  random variables such that  $\PE[\xi_0]=0$, $\PE[\xi_0^2]=1$ and $\kappa_4\eqdef\PE[\xi_0^4]-3$ is finite. 
\end{enum_A}
\end{assumption}
Here the linear assumption may only apply to a $K$-order increment of $X$ to allow $X$ to be non-stationary.

Our goal is to estimate $d$ by using a Discrete Wavelet Transform (DWT) of $X$.
In order to study the asymptotic properties of the estimator, we use a central limit
theorem for an array of squares of decimated linear processes, established
in~\cite{roueff:taqqu:2008a}, see Theorems~1 and~2 in this reference. Using this result, we extend to the non-Gaussian linear 
processes setting, asymptotic normality 
results for wavelet estimation of the memory parameter $d$, that have been obtained so far for Gaussian processes
(see~\cite[Thoerem~1]{moulines:roueff:taqqu:2006b} and \cite[Theorem~5]{moulines-roueff-taqqu-2007c}). We treat both the
log-regression wavelet estimator and the wavelet Whittle estimator.

In Section~\ref{sec:quadr-forms-wavel}, we give a simplified formulation of the central limit
theorem~\cite[Theorem~2]{roueff:taqqu:2008a} and apply it to the Discrete Wavelet 
Transform setting, obtaining a result on the asymptotic distribution of the scalogram of a linear memory process as the scale
index and the number of observed wavelet coefficients both tend to infinity, see
Theorem~\ref{theo:genericJointConvResult}. 
We then consider two estimators of the memory
parameter $d$, the log--regression wavelet estimator in Section~\ref{sec:log-regr-estim} and the wavelet Whittle estimator in
Section~\ref{sec:wavel-whittle-estim}. Using Theorem~\ref{theo:genericJointConvResult}, we show that both these estimators
are asymptotically normal.

\section{Definition of the empirical scalogram of a finite sample}

We now introduce the wavelet setting and recall the definition of the \emph{scalogram} and the empirical scalogram.
Introduce the functions $\phi(t)$, $t\in\Rset$, and $\psi(t)$, $t\in\Rset$, which will play the role of the father and mother
wavelets respectively, and let $\hat{\phi}(\xi)\eqdef \int_{\Rset} \phi(t)\rme^{-\rmi\xi t}\,\rmd t$ and $\hat{\psi}(\xi)\eqdef \int_{\Rset}
\psi(t)\rme^{-\rmi\xi t}\,\rmd t$ denote their Fourier 
transforms. We suppose that the wavelets $\phi$ and $\psi$ satisfy the following assumptions~:

\begin{enum_W}
\item\label{item:Wreg} $\phi$ and $\psi$ are integrable and have compact supports, $\hat{\phi}(0) = \int_{\Rset} \phi(x) \rmd
  x =1$ and $\int_{\Rset} \psi^2(x) \rmd x = 1$.
\item\label{item:psiHat}
There exists $\alpha>1$ such that
$\sup_{\xi\in\Rset}|\hat{\psi}(\xi)|\,(1+|\xi|)^{\alpha} <\infty$,
\item\label{item:MVM} The function $\psi$ has  $M$ vanishing moments, \ie\ $ \int_{\Rset} t^l \psi(t) \,\rmd t=0$ for all $l=0,\dots,M-1$
\item\label{item:MIM} The function $ \sum_{k\in\Zset} k^l\phi(\cdot-k)$
is a polynomial of degree $l$ for all $l=0,\dots,M-1$.
\end{enum_W}

We now define what
we call the DWT of $X$ in discrete time. Define the family $\{\psi_{j,k}, j > 0, k \in \Zset\}$ of translated and
dilated functions 
\begin{equation}\label{eq:psiJK}
\psi_{j,k}(t)=2^{-j/2}\,\psi(2^{-j}t-k) \eqsp.
\end{equation}
Using the scaling function $\phi$, we first define the functions
\begin{equation}\label{eq:bX}
\bX_n(t) \eqdef \sum_{k=1}^n X_k \,\phi(t-k) \quad\text{and}\quad \bX(t) \eqdef \sum_{k\in\Zset} X_k\, \phi(t-k)
\end{equation}
The (details) wavelet coefficients are then defined as follows, for all $j \geq 0, k \in \Zset$,
\begin{align}\label{eq:coeff}
\dwt_{j,k} \eqdef \int_{-\infty}^\infty \bX(t) \psi_{j,k}(t)\,\rmd t.
\end{align}
These wavelet coefficients are the DWT of $X$. If the support of the scaling function $\phi$ is included in $(-\L,0)$ for
some integer $\L\geq1$, then $\bx_n(t)=\bx(t)$ for all $t= 0,\dots, n-\L+1$. 
If the support of the wavelet function $\psi$ is included in $(0,\L)$, then, the support of $\psi_{j,k}$ is included in the
interval $(2^j k, 2^j(k+\L) )$. Hence
\begin{equation}\label{eq:coeffN}
\dwt_{j,k}=\int_{-\infty}^\infty \bX_n(t) \psi_{j,k}(t)\,\rmd t \; ,
\end{equation}
for all $(j,k)\in\indexset_n$, where
\begin{equation}
\label{eq:deltan}
\indexset_n \eqdef \{(j,k):\,j\geq0, 0\leq k  \leq 2^{-j}(n-\L+1)-\L \} \eqsp.
\end{equation}

For any $j$, the wavelet coefficients $\{\dwt_{j,k} \}_{k \in \Zset}$ are obtained by discrete convolution and downsampling.
More precisely, under \ref{item:Wreg}, for all $j \geq 0$, $k \in \Zset$,
\begin{equation}\label{eq:down}
\dwt_{j,k}=\sum_{l\in\Zset} x_l\,h_{j,2^jk-l}=(h_{j,\cdot}\star X)_{2^jk}= (\downarrow^j [h_{j,\cdot}\star X])_{k},
\end{equation}
where $ h_{j,l} \eqdef 2^{-j/2} \int_{-\infty}^\infty \phi(t+l)\psi(2^{-j}t)\,\rmd t$,  $\star$ denotes the convolution  of discrete sequences and,
for any sequence $\{ c_k \}_{k \in \Zset}$, $(\downarrow^j c)_k = c_{k 2^j}$.
For all $j\geq0$,  $H_j(\lambda)\eqdef\sum_{l\in\Zset}h_{j,l}\rme^{-\rmi\lambda l}$ denotes the discrete Fourier transform of $\{ h_{j,l} \}_{l \in \Zset}$,
\begin{equation}\label{eq:fjdef}
H_j(\lambda) \eqdef 2^{-j/2} \int_{-\infty}^\infty \sum_{l\in\Zset}\phi(t+l)\rme^{-\rmi\lambda l} \psi(2^{-j}t)\,\rmd t.
\end{equation}
For all $j\geq0$ and all $m=0,\dots,M-1$,
\[
\sum_{l\in\Zset} h_{j,l}\,l^m= 2^{-j/2} \int_{-\infty}^\infty \psi(2^{-j}t) \sum_{l\in \Zset} \phi(t+l) l^m \rmd t \eqsp.
\]
Under assumption \ref{item:MIM}, $t \mapsto \sum_{l \in \Zset} \phi(t+l) l^m$ is a polynomial of degree $m$ and \ref{item:MVM} therefore
implies that $\sum_{l \in \Zset} h_{j,l} \, l^m= 0$; equivalently, the trigonometric polynomial $H_j$ satisfies
$\left. \frac{\rmd^m H_j(\lambda)}{\rmd\lambda^m} \right|_{\lambda=0} = 0$, $m=0, \dots, M-1$ and thus admits a zero at $0$ of degree at least equal to $M$.
Therefore,  $H_j(\lambda)$ can be factorized as 
\begin{equation}
  \label{eq:HHtilde}
  H_j(\lambda)= (1-\rme^{\rmi \lambda})^M \tilde{H}_{j}(\lambda) \; ,
\end{equation}
where $\tilde{H}_{j}(\lambda)$ is a trigonometric polynomial.
Hence, the wavelet coefficient \eqref{eq:down} may be computed as
\begin{equation}
\label{eq:op}
\dwt_{j,k}= (\downarrow^j [\tilde{h}_{j,\cdot}\star \diffop^M X])_{k}
\end{equation}
where $ \{ \tilde{h}_{j,l} \}_{l \in \Zset}$ are the coefficients of the trigonometric polynomial $\tilde{H}_j$.

Let then $\{\phi,\psi\}$ be a pair of scale function and wavelet satisfying \allWA.  
Let $X=\{X_k,\;k\in\Zset\}$ be a process 
such that  $\diffop^M X$ is weakly stationary and define the DWT  $\{\dwt_{j,k},\;j\geq0,\;k\in\Zset\}$ of $X$ by~(\ref{eq:coeff}).
By~(\ref{eq:op}), $\{\dwt_{j,k},\;k\in\Zset\}$ is a weakly stationary process for all scales $j\geq0$.
\begin{definition}\label{def:scalogram}
The \emph{scalogram} of $X$ is the non-negative sequence $\{\vj{j}{}2,\;j\geq0\}$ of variances of
$\{\dwt_{j,k},\;k\in\Zset\}$, namely
\begin{equation}
\label{eq:definitionvj}
\vj{j}{d,f^{\ast}}2 \eqdef \PVar [ \dwt_{j,0}]=\PE \left[ \dwt_{j,0}^2 \right],\quad j\geq0\eqsp.
\end{equation}
\end{definition}
\begin{remark}
Observe that, under Assumption~\ref{assump:linear}, if $M\geq K$, then $\diffop^M X$ is a centered weak stationary process
and the scalogram of $X$ is well defined. 
\end{remark}

Wavelet estimators of the memory parameter $d$ are typically
based on quadratic forms of the wavelet coefficients. 
This is reasonable because, for large scale $j$, $\log\vj{j}{d,f^\ast}2$ is  approximately an affine function of 
$j$ with slope $(2 \log2)\,d$ (see~\cite{moulines-roueff-taqqu-2007a}) and,
given $n$ observations $X_1, \dots, X_n$, $\vj{j}{d,f^\ast}2$
can be estimated by the empirical second moment
\begin{equation}\label{eq:hvj}
\hvj{j}{n} \eqdef n_j^{-1} \sum_{k=0}^{n_j-1} \dwt_{j,k}^2 \eqsp ,
\end{equation}
which is a quadratic form on the wavelet coefficients. Here we denote by $n_j$ the number of available wavelet
coefficients at scale index $j$, namely, from~(\ref{eq:deltan}), 
\begin{equation}\label{eq:ni}
n_j = [2^{-j} (n - \L + 1) - \L +1] \; ,
\end{equation}
where $\L$ is the size of the time series and $[x]$ denotes the integer part of $x$. It is important to note that
although the wavelet coefficient $\dwt_{j,k}$ does not depend on $n$, the empirical second moment $\hvj{j}{n}$ does through $n_j$.

\begin{definition}\label{def:empscal}
Let $\{\phi,\psi\}$ be a pair of scale function and wavelet satisfying \allWA\ and $n\geq1$. Let us denote the maximal scale
index  $\maxscale=\maxscale(n)$ by   
\begin{equation}
  \label{eq:defMaxscale}
\maxscale\eqdef\max\{j:\;n_j>0\}=\left\lceil\log_2\left(\frac{n-\L+1}{\L}\right)\right\rceil\;,
\end{equation}
where $n_j$ is defined by~(\ref{eq:ni}).
The \emph{empirical scalogram} of the sample  $\{X_1,\dots,X_n\}$ is the non-negative process $\{\hvj{j}{n},\;j\geq0\}$, where
\begin{enumerate}[$\bullet$]
\item  for all $j=1,\dots,\maxscale$, $\hvj{j}{n}$ is defined by~(\ref{eq:hvj}),
\item and by convention, $\hvj{j}{n}=0$ for $j>\maxscale$.
\end{enumerate}
\end{definition}

The estimator of the memory parameter $d$ can then be obtained as follows~:
\begin{enumerate}[(1)]
\item by
regressing the logarithm of the empirical variance $\log (\hvj{i}{n})$  for a finite number of scale indices
$j \in \{\lowscale, \dots, \upscale\}$
where $\lowscale$ is the lower scale and $\upscale\leq\maxscale$ is the upper scale in the regression, see Section~\ref{sec:log-regr-estim}. 
\item by minimizing a contrast derived from the likelihood of an array of independent Gaussian random variables 
each row $j\in \{\lowscale, \dots, \upscale\}$ of which having  empirical variance $\log (\hvj{i}{n})$, see Section~\ref{sec:wavel-whittle-estim}. 
\end{enumerate}

\section{Joint weak convergence of the empirical scalogram of a linear process}
\label{sec:quadr-forms-wavel}

We let $\cl$ denote the convergence in law. 
For convenience, we first state a CLT based on results  of~\cite{roueff:taqqu:2008a}.

\begin{theorem}\label{thm:CLTlocal}
Let $\{\vseq_{i,j}(t),\;t\in\Zset\}$ be  real-valued sequences satisfying $\sum_{t\in\Zset} \vseq_{i,j}^2(t)<\infty$ for
all $i=1,\dots,\dimV$ and $j\geq0$. 
Suppose that there exist $\delta>1/2$, $\varepsilon\in(0,\pi]$, a sequence of $[-\pi,\pi)$-valued functions $\vphase_j(\lambda)$
defined on $\lambda\in\Rset$ and continuous functions $\vfonc_{i,\infty}$, $i=1,\dots,\dimV$, defined on $\Rset$ such that     
\begin{align}
\label{eq:unfiBoundvfoncLocale}
&\sup_{j\geq0} \sup_{|\lambda|\leq\varepsilon}
2^{-j/2}|\vfonc_{i,j}(\lambda)|(1+2^j\left|\lambda\right|)^\delta < \infty \;,  \\
\label{eq:Limitvfonc}
&\lim_{j\to\infty}2^{-j/2}\vfonc_{i,j}(2^{-j}\lambda)\rme^{\rmi\vphase_j(\lambda)} = \vfonc_{i,\infty}(\lambda) \quad\text{for all}\quad
\lambda\in\Rset\;,\\
\label{eq:BoundvfoncLocaleNonZero}
&n_j^{1/2}\;\int_{0}^\pi\1(|\lambda|>\varepsilon)\;
|\vfonc_{i,j}(\lambda)|^2\;\rmd\lambda \to0\quad\text{as}\quad j \to\infty \;,
\end{align}
where $\vfonc_{i,j}$ denotes the Fourier series associated to the sequence  $\{\vseq_{i,j}(t),\;t\in\Zset\}$,
\begin{equation}
\label{eq:vfoncdef}
\vfonc_{i,j}(\lambda)=(2\pi)^{-1/2}\,\sum_{t\in\Zset} \vseq_{i,j}(t)\,\rme^{-\rmi\lambda t} \; .
\end{equation}
Define $\{Z_{i,j,k},\;i=1,\dots,\dimV, j\geq0, k\in\Zset\}$ as
\begin{equation} 
\label{eq:Zdef}
Z_{i,j,k} = \sum_{t\in\Zset} \vseq_{i,j}(2^jk-t) \, \xi_t  \;,\quad i=1,\dots,\dimV,\,k\in\Zset,\,j\geq0\;,
\end{equation}
where $\{\xi_t,\;t\in\Zset\}$ satisfies~\ref{A:1}. Then, for any diverging sequence $(n_j)$,  as $j\to\infty$,
\begin{equation}
\label{eq:CenteredZ}
n_j^{-1/2} \sum_{k=0}^{n_j-1} 
\left[
\begin{array}{c}
Z_{1,j,k}^2 -\PE[Z_{1,j,k}^2]\\
\vdots\\
Z_{\dimV,j,k}^2 -\PE[Z_{\dimV,j,k}^2]
\end{array}
\right]\cl\calN(0,\Gamma)\;,
\end{equation}
where $\Gamma$ is the covariance matrix defined by
\begin{equation}
  \label{eq:GammaDef}
  \Gamma_{i,i'}= 
4\pi \,\int_{-\pi}^\pi \left|\sum_{p\in\Zset}\vfonc_{i,\infty}\overline{\vfonc_{i',\infty}}(\lambda+2p\pi)\right|^2 \, \rmd\lambda\;,\quad 1\leq i, i'\leq \dimV\; .
\end{equation}
Moreover, one has
\begin{equation}
    \label{eq:varianceZlim}
\lim_{j\to\infty}\PE\left[Z_{i,j,0}^2\right]=\int_{-\infty}^{\infty}\left|\vfonc_{i,\infty}(\lambda)\right|^2\,\rmd\lambda <
\infty \; .
\end{equation}
\end{theorem}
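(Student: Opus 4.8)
The plan is to obtain this statement as an application of the general central limit theorem for sums of squares of decimated linear processes established in \cite{roueff:taqqu:2008a}, to which the three hypotheses \eqref{eq:unfiBoundvfoncLocale}--\eqref{eq:BoundvfoncLocaleNonZero} are precisely tailored. By the Cram\'er--Wold device it suffices to prove the one-dimensional convergence of $n_j^{-1/2}\sum_{k=0}^{n_j-1}\sum_{i=1}^{\dimV}c_i(Z_{i,j,k}^2-\PE[Z_{i,j,k}^2])$ for every fixed $(c_1,\dots,c_{\dimV})\in\Rset^{\dimV}$. Since by \eqref{eq:Zdef} each $Z_{i,j,k}$ is a linear functional of the single i.i.d.\ sequence $\{\xi_t\}$, such a linear combination is itself a quadratic form in $\{\xi_t\}$, so the whole problem reduces to a triangular-array central limit theorem (the array being indexed by $j\to\infty$, with $n_j\to\infty$) for a sequence of quadratic forms. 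Two things must then be supplied: the value of the limiting covariance, and the Gaussianity of the limit.

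First I would compute the covariance. Setting $r^{(j)}_{i,i'}(m)\eqdef\PE[Z_{i,j,0}Z_{i',j,m}]$, the identity $\PE[\xi_s\xi_t]=\delta_{s,t}$ together with \eqref{eq:vfoncdef} gives
\[
r^{(j)}_{i,i'}(m)=\sum_{t\in\Zset}\vseq_{i,j}(t)\,\vseq_{i',j}(t+2^jm)=\int_{-\pi}^{\pi}\vfonc_{i,j}(\lambda)\,\overline{\vfonc_{i',j}(\lambda)}\,\rme^{-\rmi 2^jm\lambda}\,\rmd\lambda ,
\]
where the decimation factor $2^j$ in the exponent is already the source of the non-vanishing correlations stressed in the abstract. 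A direct expansion of the quadratic form shows that the $(i,i')$ entry of the normalized covariance equals $2\sum_{|m|<n_j}(1-|m|/n_j)\,r^{(j)}_{i,i'}(m)^2$ plus a fourth-cumulant term. Substituting $\omega=2^j\lambda$ above exhibits $\{r^{(j)}_{i,i'}(m)\}_m$ as the Fourier coefficients of the aliased, rescaled cross-spectrum
\[
g^{(j)}_{i,i'}(\theta)\eqdef 2^{-j}\sum_{p\in\Zset}\bigl(\vfonc_{i,j}\overline{\vfonc_{i',j}}\bigr)\bigl(2^{-j}(\theta+2p\pi)\bigr)\,\1\bigl(|\theta+2p\pi|\leq 2^j\pi\bigr),\qquad\theta\in(-\pi,\pi),
\]
so that by Parseval and a Fej\'er/Toeplitz argument the Gaussian part of the covariance converges to $4\pi\int_{-\pi}^{\pi}|g^{(\infty)}_{i,i'}(\theta)|^2\,\rmd\theta$. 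Here $g^{(\infty)}_{i,i'}=\sum_{p}\vfonc_{i,\infty}\overline{\vfonc_{i',\infty}}(\cdot+2p\pi)$ is the pointwise limit of $g^{(j)}_{i,i'}$ coming from \eqref{eq:Limitvfonc}, in which the common phase $\rme^{\rmi\vphase_j}$ cancels in the product $\vfonc_{i,j}\overline{\vfonc_{i',j}}$; the passage to the limit is justified by dominated convergence, the $L^2$ dominating function $\sum_p(1+|\theta+2p\pi|)^{-2\delta}$ being supplied by \eqref{eq:unfiBoundvfoncLocale} precisely because $\delta>1/2$. This limit is exactly $\Gamma_{i,i'}$ of \eqref{eq:GammaDef}, and the same rescaling applied to $\PE[Z_{i,j,0}^2]=\int_{-\pi}^{\pi}|\vfonc_{i,j}(\lambda)|^2\,\rmd\lambda$ yields \eqref{eq:varianceZlim}.

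It remains to account for the Gaussianity and, in particular, for the absence of $\kappa_4$ from $\Gamma$. The fourth-cumulant term above is negligible because \eqref{eq:Limitvfonc}--\eqref{eq:unfiBoundvfoncLocale} force $\vseq_{i,j}$ to be spread over the scale $2^j$: its diagonal contribution is $\kappa_4\sum_t\vseq_{i,j}^4(t)=o(1)$ and the off-diagonal contribution is controlled the same way, so the whole term vanishes --- this is exactly why no fourth-cumulant term survives in \eqref{eq:GammaDef}. Establishing that the limit is genuinely Gaussian is the substantive step and the reason a dedicated result is invoked: since only four moments of $\xi_0$ are assumed in \ref{A:1}, a plain method-of-cumulants argument is unavailable (cumulants of the quadratic form of order larger than two need not exist), so \cite{roueff:taqqu:2008a} proceeds instead by truncation and approximation together with a Lindeberg- or martingale-type central limit theorem. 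The genuine obstacle there, and the one I expect to be hardest, is that, unlike in the Fourier/periodogram setting, the decimation by $2^j$ prevents the correlations $r^{(j)}_{i,i'}(m)$ from decaying, so the quadratic form cannot simply be replaced by a short-range dependent one; the decay exponent $\delta>1/2$ in \eqref{eq:unfiBoundvfoncLocale} and the high-frequency control \eqref{eq:BoundvfoncLocaleNonZero} (with its $n_j^{1/2}$ weight, ensuring the array is asymptotically determined by its low-frequency rescaled limit) are exactly the ingredients that make the required uniform $\ell^2$ and operator-norm estimates --- and hence the Lindeberg condition --- hold uniformly over the array.
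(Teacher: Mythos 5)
Your proposal is correct and follows essentially the same route as the paper: the paper's proof of Theorem~\ref{thm:CLTlocal} is simply a direct application of Theorems~1 and~2 of \cite{roueff:taqqu:2008a} (with $\decim_j=2^j$, $\lambda_{i,\infty}=0$, $\limcons_{i,i'}=1$ and $\vfoncsym_{i,i'}=\vfonc_{i,\infty}\overline{\vfonc_{i',\infty}}$, the case $\varepsilon=\pi$ making~(\ref{eq:BoundvfoncLocaleNonZero}) vacuous), together with Proposition~1 of that reference for~(\ref{eq:varianceZlim}), which is exactly your opening reduction. Your additional reconstruction of the internals of the cited result (aliased rescaled cross-spectrum, phase cancellation in $\vfonc_{i,j}\overline{\vfonc_{i',j}}$, domination by $\sum_p(1+|\theta+2p\pi|)^{-2\delta}$, and vanishing of the $\kappa_4$ term under growing decimation) is consistent with that paper, though the paper itself does not reprove any of it.
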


\begin{proof}
Observe first that we allowed $\varepsilon=\pi$, in which case Condition~(\ref{eq:BoundvfoncLocaleNonZero}) is always satisfied since
the integral vanishes for all $j\geq0$. 
The CLT~(\ref{eq:CenteredZ}) is a strict application of  Theorem~1 in~\cite{roueff:taqqu:2008a} for
$\varepsilon=\pi$ and Theorem~2 in~\cite{roueff:taqqu:2008a} for  $\varepsilon<\pi$. 
Using the notations of~\cite{roueff:taqqu:2008a} we have here
$\decim_j=2^j$ for all $j\geq0$ and $\lambda_{i,\infty}=0$ for all $i=1,\dots,\dimV$.
Observe that in this case, in these two theorems, $\limcons_{i,i'}=1$ and
$\vfoncsym_{i,i'}=\vfonc_{i,\infty}\overline{\vfonc_{i',\infty}}$ for all $i,i'=1,\dots,\dimV$. 
The limit~(\ref{eq:varianceZlim})
follows from   Relation~(15) in Proposition~1 in the same paper, see also Remark~11.
\end{proof}  
\begin{remark}
When the convergence rate to the limit~(\ref{eq:varianceZlim}) is fast enough, we will be able to replace
the expectations in~(\ref{eq:CenteredZ}) by 
$\int_{-\infty}^{\infty}\left|\vfonc_{i,\infty}(\lambda)\right|^2\,\rmd\lambda$, $i=1,\dots,\dimV$, which does not depend on $j$.  
\end{remark}

In the Gaussian case the  spectral density of the wavelet coefficients is
sufficient for studying the quadratic forms~(\ref{eq:hvj}) since second order properties fully determine the distribution of the wavelet
coefficients. 
This is not so in the linear case, which we consider in this paper. 
It is possible, however, to establish a multivariate
central limit theorem for the empirical scalogram.
 
\begin{theorem}\label{theo:genericJointConvResult}
Let $X$ be an $M(d)$ process with short-range spectral density $f^\ast$ 
and suppose that Assumption~\ref{assump:linear} holds.  
Assume that~\allWA\ hold with
\begin{equation}
  \label{eq:CondMalphadKsansBeta}
   1/2-\alpha< d\leq M\quad\text{and}\quad K\leq M \;.
\end{equation}
Let $\lowscale=\lowscale(n)$ be a scale index depending on $n$ such that $\lowscale(n)\to\infty$ and
$n2^{-\lowscale(n)}\to\infty$ as $n\to\infty$. 
Assume that one of the two following conditions hold.
\begin{align}
\label{eq:CondFstarBounded}
&\sup_{\lambda\in(-\pi,\pi)}f^\ast(\lambda)<\infty\\
\label{eq:CondFstarUNBounded}
& (n2^{-\lowscale(n)})^{1/2}2^{\lowscale(n)(1-2\alpha-2d)}\to0\quad\text{as $n\to\infty$} \;.
\end{align}
Then, as $n\to\infty$, one has  the folowing central limit:
\begin{equation}
\label{eq:ScalogramLimit}
\left\{\sqrt{n2^{-\lowscale(n)}}2^{-2\lowscale(n) d}(\hvj{\lowscale(n)+\dj}{n_{\lowscale(n)+\dj}}-\vj{\lowscale(n)+\dj}{d,f^\ast}{2}),\;\dj\geq0\right\}
\cl \left\{\hvjLIM{d}{\dj},\;\dj\geq0\right\} \; ,
\end{equation}
where $\hvjLIM{d}{\centerdot}$ denotes a centered Gaussian process  
defined on $\Nset$ with covariance function 
$\AVvarJoint{d}{\dj,\dj'}=\PCov\left(\hvjLIM{d}{\dj},\hvjLIM{d}{\dj'}\right)$, $\dj,\dj'\geq0$, given by
\begin{equation}
\label{eq:CovMatrix}
\AVvarJoint{d}{\dj,\dj'}\eqdef
4 \pi \, (f^\ast(0))^2 \, 2^{4d( \dj\vee\dj') + \dj \wedge \dj'} \,
\int_{-\pi}^{\pi} \left|\bdensasymp{|\dj-\dj'|}{\lambda}{d} \right|^2 \, \rmd\lambda\eqsp ,
\end{equation}
with, for all $\dj\geq0$ and $\lambda\in(-\pi,\pi)$,
\begin{equation}\label{eq:bDpsi}
\bdensasymp[\psi]{\dj}{\lambda}{d} \eqdef
\sum_{l\in\Zset} |\lambda+2l\pi|^{-2d}\,\be_{\dj}(\lambda+2l\pi) \,
\overline{\hat{\psi}(\lambda+2l\pi)}\hat{\psi}(2^{-\dj}(\lambda+2l\pi)).
\end{equation}
and $\be_\dj(\xi) \eqdef 2^{-\dj/2}[1, \rme^{-\rmi2^{-\dj}\xi}, \dots, \rme^{-\rmi(2^{\dj}-1)2^{-\dj}\xi}]^T$.
\end{theorem}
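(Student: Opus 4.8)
The plan is to derive~\eqref{eq:ScalogramLimit} from Theorem~\ref{thm:CLTlocal} by exhibiting the empirical scalogram as a vector of normalized quadratic forms of decimated linear processes of the form~\eqref{eq:Zdef}. First I would write each wavelet coefficient as a linear functional of the innovations. Since $K\le M$, the increment $\diffop^MX=\diffop^{M-K}\diffop^KX$ is, by Assumption~\ref{assump:linear}, a centered weakly stationary linear process driven by $\{\xi_t\}$; filtering by $\tilde h_{j,\cdot}$ and decimating as in~\eqref{eq:op} gives $\dwt_{j,k}=\sum_{t\in\Zset}\vseq_j(2^jk-t)\,\xi_t$, which is exactly~\eqref{eq:Zdef} with decimation $2^j$. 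Using the factorization~\eqref{eq:HHtilde} together with the definition~\eqref{eq:SpectralDensity:FractionalProcess} of the generalized spectral density, a direct computation shows that the associated Fourier series satisfies $|\vfonc_j(\lambda)|^2=|H_j(\lambda)|^2\,f(\lambda)$, where $f(\lambda)=|1-\rme^{-\rmi\lambda}|^{-2d}\,f^\ast(\lambda)$; this identity is the bridge between the wavelet filter and the memory structure.

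Because Theorem~\ref{thm:CLTlocal} imposes a single decimation $2^j$ common to all components, whereas scale $\lowscale+\dj$ carries decimation $2^{\lowscale+\dj}$, I would reduce first to finite-dimensional convergence over a finite block of offsets $\dj\in\{0,\dots,\dimV-1\}$, the full statement on $\Nset$ following since the candidate limit is a well-defined Gaussian process. Taking the ambient scale to be the coarsest one, $\lowscale+\dimV-1$, and encoding the $2^{(\dimV-1)-\dj}$ sub-phases of each finer scale as distinct coordinates of the vector in~\eqref{eq:Zdef}, the sum of squares over these coordinates and over the coarse index reconstitutes $\sum_k\dwt_{\lowscale+\dj,k}^2$ up to boundary terms that are negligible after normalization (the hypothesis $n2^{-\lowscale(n)}\to\infty$ guarantees $n_{\lowscale+\dj}\to\infty$, so the sequences diverge as required). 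Into each coordinate I would fold the factor $2^{-(\lowscale+\dj)d}$, which is precisely what renders the limit in~\eqref{eq:Limitvfonc} finite: with this factor included one has $2^{-j(d+1/2)}\vfonc_j(2^{-j}\lambda)\to\sqrt{f^\ast(0)}\,|\lambda|^{-d}\,\overline{\hat\psi(\lambda)}$, combining the limit $2^{-j/2}H_j(2^{-j}\lambda)\to\overline{\hat\psi(\lambda)}$ (which uses $\hat\phi(0)=1$ and the decay~\ref{item:psiHat} to discard the aliases) with $f(2^{-j}\lambda)\sim 2^{2jd}|\lambda|^{-2d}f^\ast(0)$, the latter relying only on the continuity of $f^\ast$ at the origin. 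Matching $2^{-(\lowscale+\dj)d}$ against the global normalization $2^{-2\lowscale d}$ and $(n2^{-\lowscale})^{1/2}$ against $n_{\lowscale+\dj}^{1/2}$ is then a bookkeeping of powers of two yielding the factor $2^{4d(\dj\vee\dj')+\dj\wedge\dj'}$ of~\eqref{eq:CovMatrix}; I have checked this on the diagonal, where the prefactor of the limiting standard deviation is $2^{2\dj d+\dj/2}$, giving $2^{(4d+1)\dj}$ after squaring.

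With this identification, the limiting covariance $\Gamma$ of~\eqref{eq:GammaDef} becomes the aliased product $\sum_p\vfonc_{\dj,\infty}\overline{\vfonc_{\dj',\infty}}(\lambda+2p\pi)$ of the limit transfer functions; carrying out the periodization and bundling the sub-phases into the vector $\be_{|\dj-\dj'|}$ produces exactly the between-scale density $\bdensasymp{|\dj-\dj'|}{\lambda}{d}$ of~\eqref{eq:bDpsi}, together with the constant $4\pi(f^\ast(0))^2$, completing the evaluation of~\eqref{eq:CovMatrix}. The fourth-order cumulant term is absorbed into Theorem~\ref{thm:CLTlocal} and vanishes in the limit, which explains the absence of $\kappa_4$ in~\eqref{eq:GammaDef}.

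The main obstacle is the verification of the hypotheses~\eqref{eq:unfiBoundvfoncLocale}--\eqref{eq:BoundvfoncLocaleNonZero}, uniformly in $j$. The uniform bound~\eqref{eq:unfiBoundvfoncLocale} on $2^{-j/2}|\vfonc_j(\lambda)|(1+2^j|\lambda|)^\delta$ is exactly where the constraint $1/2-\alpha<d\le M$ enters: near the origin the singular factor $f(\lambda)\sim|\lambda|^{-2d}$ must be dominated, for small $2^j|\lambda|$, by the $M$ vanishing moments encoded in $H_j$ through the factor $(1-\rme^{\rmi\lambda})^M$ (requiring $d\le M$), and, for large $2^j|\lambda|$, by the decay of $\hat\psi$ (requiring $d<\alpha$ for both $\ltwo$-integrability and summability of the aliases). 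The tail condition~\eqref{eq:BoundvfoncLocaleNonZero}, controlling the contribution of frequencies $|\lambda|>\varepsilon$ at rate $n_j^{1/2}$, is where the dichotomy between~\eqref{eq:CondFstarBounded} and~\eqref{eq:CondFstarUNBounded} intervenes: when $f^\ast$ is bounded one may take $\varepsilon=\pi$ and the condition is automatic, whereas an unbounded $f^\ast$ forces the explicit rate~\eqref{eq:CondFstarUNBounded}. These uniform spectral estimates, inherited from the Gaussian theory, are the delicate part; once they are established, the remaining computation of the limit covariance is routine.
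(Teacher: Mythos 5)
Your proposal follows essentially the same route as the paper's proof: reduce to finite-dimensional convergence, apply Theorem~\ref{thm:CLTlocal} with the single decimation $2^{\lowscale+\ell}$ of the coarsest scale while encoding each finer scale through phase-shifted coordinates (the paper's $\vseq_{i,j}(t)=2^{-\lowscale d}\useq_{\lowscale+\dj}(t+\dk 2^{\lowscale+\dj})$, $i=2^{\ell-\dj}+\dk$), discard the boundary remainders $R_{j'}$ after normalization, verify \eqref{eq:unfiBoundvfoncLocale}--\eqref{eq:BoundvfoncLocaleNonZero} via the same spectral bounds on $H_{j'}$ and the same $\varepsilon=\pi$ versus \eqref{eq:CondFstarUNBounded} dichotomy, and evaluate the covariance by periodization into $\be_{|\dj-\dj'|}$, exactly as the paper does. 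Two cosmetic slips only: the paper folds $2^{-\lowscale d}$ rather than your $2^{-(\lowscale+\dj)d}$ into each coordinate (an equivalent bookkeeping), and the decay requirement at high frequency is $\alpha+d>1/2$ (i.e.\ $d>1/2-\alpha$, so that $\delta=\alpha+d>1/2$ in \eqref{eq:unfiBoundvfoncLocale}), not $d<\alpha$ as you state.
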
 
\begin{remark}
  We assume $1-2\alpha-2d<0$
  in~(\ref{eq:CondMalphadKsansBeta}) so that~(\ref{eq:CondFstarUNBounded}) imposes a sufficiently fast growth rate on
  $\lowscale(n)$ as $n\to\infty$. On the other hand this rate has to be slow enough for the assumption
  $n2^{-\lowscale(n)}\to\infty$ to hold. 
\end{remark}
\begin{proof}
In~(\ref{eq:defXlinCase}), the sequence $\{\useq^{(K)}(t),\;t\in\Zset\}$ depends on $K$. To define a quantity which does
not, we go to the Fourier domain and set
$$
\ufonc(\lambda)\eqdef(2\pi)^{-1/2}\,(1-\rme^{-\rmi\lambda})^{-K}\,\sum_{t\in\Zset} \useq^{(K)}(t) \, \rme^{-\rmi\lambda t} \, ,
$$
where the sum over $t\in\Zset$ converges in the sense of $L^2(-\pi,\pi)$. This function $\ufonc(\lambda)$ satisfies 
\begin{equation}
  \label{eq:ufoncANDf}
|\ufonc(\lambda)|^2 = |1-e^{-\rmi\lambda}|^{-2d} \, f^\ast(\lambda)= f(\lambda) \; ,   
\end{equation}
where $f$ is defined in~(\ref{eq:SpectralDensity:FractionalProcess}).
Moreover, by~(\ref{eq:op}), since $K\leq M$ (see Condition~(\ref{eq:CondMalphadKsansBeta})), for all $j\in\Nset$ and
$k\in\Zset$, the wavelet coefficients of $X$ can be expressed as 
$$
\dwt_{j,k} = ( \downarrow^j [\tilde{h}_{j,\cdot}\star \diffop^{M-K} ( \useq^{(K)} \star \xi ) ])_k \; .
$$
Since $\tilde{h}_{j,\cdot}$ is a finite sequence, we obtain that
\begin{equation}
  \label{eq:wavLinDecim}
\dwt_{j,k} = \sum_{t\in\Zset} \useq_{j}(k2^j-t) \, \xi_t \;,
\end{equation}
where $\{\useq_{j}(t),\,t\in\Zset\}$ is the sequence $\tilde{h}_{j,\cdot}\star \diffop^{M-K} ( \useq^{(K)})$ which is characterized by the
$L^2(-\pi,\pi)$ converging series 
\begin{equation}
  \label{eq:ufoncJDef}
\ufonc_j(\lambda)\eqdef
(2\pi)^{-1/2}\,\sum_{t\in\Zset} \useq_{j}(t)\,\rme^{-\rmi\lambda t} 
= \tilde{H}_j(\lambda)(1-\rme^{-\rmi\lambda})^{M}\ufonc(\lambda) \; ,
\end{equation}
which, in view of~(\ref{eq:HHtilde}), can be simply written as
\begin{equation}
  \label{eq:ufoncJ}
\ufonc_j(\lambda)=H_j(\lambda)\ufonc(\lambda)\;.
\end{equation}

To prove the theorem, we need to show that, for any integer $\ell\geq0$, one has
\begin{equation}
\label{eq:JointCentralLimitEmpVar}
\sqrt{n2^{-\lowscale}}2^{-2\lowscale d} \left(\left[
\begin{array}{c}
\hvj{\lowscale}{n_{\lowscale}}-\vj{\lowscale}{d,f^\ast}{2}\\
\hvj{\lowscale+1}{n_{\lowscale+1}}-\vj{\lowscale+1}{d,f^\ast}{2}\\
\vdots\\
\hvj{\lowscale+\ell}{n_{\lowscale+\ell}}-\vj{\lowscale+\ell}{d,f^\ast}{2}
\end{array}
\right] \right) \cl \calN\left(0, \left[\AVvarJoint{d}{\dj,\dj'}, \,\dj,\dj'=0,\dots,\ell\right] \right) \; .
\end{equation}
To this end, we will apply Theorem~\ref{thm:CLTlocal} by relating the right-hand side of~(\ref{eq:JointCentralLimitEmpVar}) to the
right-hand side of~(\ref{eq:CenteredZ}) and by expressing the empirical scalogram $\hvj{\lowscale+\dj}{n_{\lowscale}+\dj}$, $0\leq\dj\leq\ell$
in terms of $Z_{i,j,k}$ with adapted indices $j,k$ and $i$.

We let  $j=\lowscale+\ell$, that is $j$ is the maximal scale in~(\ref{eq:JointCentralLimitEmpVar}). We let $k$ take values
$k=0,\dots,n_j$, where $n_j$, given by~(\ref{eq:ni}) is the number of wavelet coefficients available at the maximal scale
$j$. In Theorem~\ref{thm:CLTlocal}, $Z_{i,j,k}$ is viewed as the $i^{th}$ component of a $k$-wise stationary vector, with
$i=1,\dots,\dimV$. In order to recover this stationarity from the set of wavelet coefficients used to compute the empirical
variances in~(\ref{eq:JointCentralLimitEmpVar}), we do as follows.
We represent $i$ as  $i=2^{\ell-\dj}+\dk$ where $\dj\in\{0,\dots,\ell\}$ and
$\dk\in\{0,\dots,2^{\ell-\dj}-1\}$ and let $\dimV=\sum_{\dj=0}^\ell 2^{\ell-\dj}=2^{\ell+1}-1$. For each $(\dj,\dk)$, we set
$i=2^{\ell-\dj}+\dk$, $j=\lowscale+\ell$, and
\begin{equation}
  \label{eq:vseqDefWav}
\vseq_{i,j}(t) \eqdef 2^{-\lowscale d}\useq_{\lowscale+\dj}(t+\dk2^{\lowscale+\dj}),\quad t\in\Zset \;,
\end{equation}
where $\useq_{\lowscale+\dj}$ is defined in~(\ref{eq:wavLinDecim}). Thus if we focus on a scale $j'\geq \lowscale$ and express it
as $j'=\lowscale+\dj=j-\ell+\dj$ (see Figure~\ref{fig:jLjdjj'}), we have
\begin{equation}
  \label{eq:vw}
\vseq_{i,j}(t) = 2^{-\lowscale d}\useq_{j'}(t+\dk2^{j'}),\quad t\in\Zset \;.
\end{equation}
Hence, by definition of $Z_{i,j,k}$ in~(\ref{eq:Zdef}), one has
\begin{align}
\nonumber
   Z_{i,j,k} &=\sum_{t\in\Zset} \vseq_{i,j}(2^j k- t) \xi_t \\
\nonumber
&=2^{-\lowscale d}\sum_{t\in\Zset} \useq_{j'}(2^{j'}\{2^{\ell-\dj} k+\dk\}-t) \, \xi_t\\
\label{eq:ZandDWT}
&=2^{-\lowscale d}\dwt_{j',2^{\ell-\dj} k+\dk}\;.
\end{align}
By~(\ref{eq:ni}),
$$
n_{j'}=2^{-(j-\ell+\dj)}(n-\L+1)-\L+1= 2^{\ell-\dj}\;n_j+(\L-1)(2^{\ell-\dj}-1),
$$
and hence by~(\ref{eq:definitionvj}),~(\ref{eq:hvj}) and~(\ref{eq:ZandDWT}),
\begin{align}
\nonumber
2^{-2\lowscale d}\left(\hvj{j'}{n_{j'}}-\vj{j'}{d,f^\ast}{2}\right)&=
n_{j'}^{-1} 2^{-2\lowscale d}\sum_{k'=0}^{n_{j'}-1}(\dwt_{j',k'}^2-\PE[\dwt_{j',k'}^2])\\
\label{eq:scaloLinearProc}
&=n_{j'}^{-1} \sum_{\dk=0}^{2^{\ell-\dj}-1} \left(\sum_{k=0}^{n_j-1}\left\{Z_{i,j,k}^2-\PE[Z_{i,j,k}^2]\right\}\right)+R_{j'} \;,
\end{align}
where $j'=j-\ell+\dj$, $k'=2^{\ell-\dj} k+\dk$, $i=2^{\ell-\dj}+\dk$ and
\begin{equation}
  \label{eq:RjprimDef}
  R_{j'}\eqdef n_{j'}^{-1}\sum_{\dk=0}^{(\L-1)(2^{\ell-\dj}-1)-1}
  \left\{Z_{i,j,n_j}^2-\PE\left[Z_{i,j,n_j}^2\right] \right\}\;.
\end{equation}
We then have 
\begin{equation}\label{eq:MatrixExprDWTwrtZ}
2^{-2\lowscale d}\left[
\begin{array}{c}
\hvj{\lowscale}{n_{\lowscale}}-\vj{\lowscale}{d,f^\ast}{2}\\
\hvj{\lowscale+1}{n_{\lowscale+1}}-\vj{\lowscale+1}{d,f^\ast}{2}\\
\vdots\\
\hvj{\lowscale+\ell}{n_{\lowscale+\ell}}-\vj{\lowscale+\ell}{d,f^\ast}{2}
\end{array}
\right]
= A_n \left[
\begin{array}{c}
\sum_{k=0}^{n_j-1} \{Z_{1,j,k}^2 -\PE[Z_{1,j,k}^2]\} \\
\sum_{k=0}^{n_j-1} \{Z_{2,j,k}^2 -\PE[Z_{1,j,k}^2]\} \\
\vdots\\
\sum_{k=0}^{n_j-1} \{Z_{\dimV,j,k}^2 -\PE[Z_{\dimV,j,k}^2]\}
\end{array}
\right] 
+
\left[
\begin{array}{c}
R_{\lowscale}\\
R_{\lowscale+1}\\
\vdots\\
R_{\lowscale+\ell}
\end{array}
\right]   \;,
\end{equation}
where
\begin{equation}
  \label{eq:AnDef}
A_n\eqdef  \left[
\begin{array}{ccc}
0\dots&\dots0&\overbrace{n_{\lowscale}^{-1}\dots n_{\lowscale}^{-1}}^{2^\ell\text{ times }}\\
0\dots0&\overbrace{n_{\lowscale+1}^{-1}\dots n_{\lowscale+1}^{-1}}^{2^{\ell-1}\text{ times }}&0\dots0\\
\vdots&\vdots&\vdots\\
n_{\lowscale+\ell}^{-1}\;\;0\dots0&0\dots0&0\dots0
\end{array}
\right]
\end{equation}
is an $(\ell+1)\times \dimV$ matrix. The entries are $n_{j'}^{-1}=n_{\lowscale+\dj}^{-1}$, where $\dj$ goes from 0 (top line) to
$\ell$ (bottom line), see Figure~\ref{fig:jLjdjj'}.
\begin{figure}[h]
  \centering \setlength{\unitlength}{1mm}
\begin{picture}(100,40)(-50,0)
\put(-40,12){\line(1,0){80}}
\put(-40,10){\line(0,1){3}}
\put(-40,15){$\lowscale$}
\put(-10,10){\line(0,1){3}}
\put(-10,15){$j'$}
\put(40,10){\line(0,1){3}}
\put(40,15){$\lowscale+\ell=j$}
\put(-40,22){\vector(1,0){30}}
\put(-10,22){\vector(-1,0){30}}
\put(-35,25){$j'-\lowscale=\dj$}
\put(-10,22){\vector(1,0){50}}
\put(40,22){\vector(-1,0){50}}
\put(0,25){$j-j'=\ell-\dj$}
\end{picture}
\caption{This figure indicates the relationship between the various variables.}\label{fig:jLjdjj'}
\end{figure}
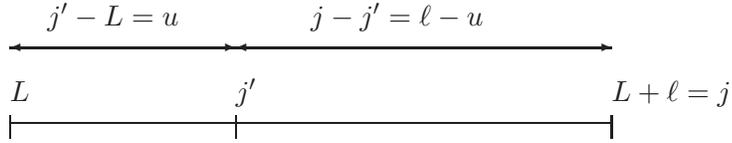

Let us check that the assumptions of Theorem~\ref{thm:CLTlocal} apply to~(\ref{eq:vseqDefWav}), that is, we
show that~\eqref{eq:unfiBoundvfoncLocale},\eqref{eq:BoundvfoncLocaleNonZero} and~(\ref{eq:Limitvfonc}) are verified for
$\vseq_{i,j}(t)$ defined by~(\ref{eq:vseqDefWav}).
Using~(\ref{eq:vw}),~(\ref{eq:vfoncdef}),~(\ref{eq:ufoncJDef}) and~(\ref{eq:ufoncJ}), we get
\begin{align*}
\vfonc_{i,j}(\lambda)&=2^{-\lowscale d}\;\rme^{\rmi \lambda\dk2^{j'}}\;\ufonc_{j'}(\lambda)\\
&= 2^{-\lowscale d}\;\rme^{\rmi \lambda\dk2^{j'}}\;H_{j'}(\lambda)\ufonc(\lambda) \;.  
\end{align*}
By  continuity of $f^\ast$ at the origin we have $\sup_{\lambda\in(-\varepsilon,\varepsilon)}\sqrt{f^\ast(\lambda)}\leq C$
for some $C>0$ and $\varepsilon\in(0,\pi]$. Morever, under  \eqref{eq:CondFstarBounded}, we may set $\varepsilon=\pi$.
By~(\ref{eq:ufoncANDf}) and~(\ref{eq:SpectralDensity:FractionalProcess}) we get, for all
$\lambda\in(-\varepsilon,\varepsilon)$, 
$$
\left|\ufonc(\lambda)\right|\leq |1-\rme^{-\rmi\lambda}|^{-d}\;\sqrt{C}
\leq \sqrt{C}\; |\lambda|^{-d}\;.
$$
By \cite[Proposition~3]{moulines-roueff-taqqu-2007a}, we have, for all $\lambda\in(-\pi,\pi)$,
$$
\left|H_{j'}(\lambda)-2^{j'/2}\hat{\phi}(\lambda)\overline{\hat{\psi}(2^{j'}\lambda)}\right|\leq C\;2^{j'(1/2-\alpha)}|\lambda|^M
$$
and
$$
\left|H_{j'}(\lambda)\right|\leq C\;2^{j'/2}\;|2^{j'}\lambda|^M\;(1+2^{j'}|\lambda|)^{-\alpha-M} \;,
$$
where $C$ is a positive constant and $\alpha$ and $M$ are defined in~\ref{item:psiHat} and~\ref{item:MVM}, respectively.
Using that $j=\lowscale+\ell$ and $j'=\lowscale+\dj$ for some $\dj$ only depending on $i$ and $\lowscale,j'=j+O(1)$, the last 4 displays
and Condition~(\ref{eq:CondMalphadKsansBeta}) easily yield
\begin{align}\label{eq:localBoundvfoncCLTwav}
&|\vfonc_{i,j}(\lambda)|\leq C\;2^{j/2}\;(1+2^j|\lambda|)^{-\alpha-d}\quad \text{for all $\lambda\in(-\varepsilon,\varepsilon)$} \\
\label{eq:localBoundvfoncCLTwav2}
&\int_{\varepsilon}^\pi |\vfonc_{i,j}(\lambda)|^2 \;\rmd\lambda \leq C\; 2^{\lowscale(1-2\alpha-2d)}\int_{\varepsilon}^\pi f^\ast(\lambda) \;\rmd\lambda \;,
\end{align}
and the bound
\begin{multline}\label{eq:localBoundvfoncCLTwav3}
\left|2^{-j/2}\vfonc_{i,j}(2^{-j}\lambda)-
2^{(\dj-\ell)/2-\lowscale d}\;\rme^{\rmi
  \lambda\dk2^{\dj-\ell}}\;\hat{\phi}(2^{-j}\lambda)\overline{\hat{\psi}(2^{\dj-\ell}\lambda)}\ufonc(2^{-j}\lambda)\right|\\
=2^{-j/2-\lowscale d}\;\left|\ufonc(2^{-j}\lambda)\right|\;
\left|H_{j'}(2^{-j}\lambda)-2^{j'/2}\hat{\phi}(2^{-j}\lambda)\overline{\hat{\psi}(2^{\dj-\ell}\lambda)}\right|\\
\leq 
C\;2^{-j(\alpha+M)}\;|\lambda|^{M-d} \;,  
\end{multline}
valid for $2^{-j}|\lambda|\leq\varepsilon$ with $C$ denoting some positive constant depending neither on $\lambda$ nor
on $j\geq0$. Relation~(\ref{eq:localBoundvfoncCLTwav}) is~\eqref{eq:unfiBoundvfoncLocale} with $\delta=\alpha+d>1/2$.
Under  \eqref{eq:CondFstarBounded}, $\varepsilon=\pi$ and \eqref{eq:BoundvfoncLocaleNonZero} trivially holds (see the proof
of Theorem~\ref{thm:CLTlocal}).
Otherwise, since~(\ref{eq:ni}) and $n2^{-\lowscale}\to\infty$ imply $n_j\sim n2^{-j}=n2^{-\lowscale-\ell}$,
Relations~\eqref{eq:localBoundvfoncCLTwav2}, \eqref{eq:CondFstarUNBounded} and the fact that $f^\ast$ is always integrable away of the origin 
(since $|1-\rme^{-\rmi\lambda}|^{K-d}f^\ast(\lambda)$ is a spectral density and $|1-\rme^{-\rmi\lambda}|^{K-d}$ is lower bounded for $\lambda$ away of zero) 
imply \eqref{eq:BoundvfoncLocaleNonZero}.
By~\ref{item:Wreg}, $\hat{\phi}$ is continuous at the origin where it takes value 1 and
using \eqref{eq:localBoundvfoncCLTwav3},~(\ref{eq:ufoncANDf}),~(\ref{eq:SpectralDensity:FractionalProcess}) and the continuity of 
$f^\ast(\lambda)$ at $\lambda=0$, we have, for all $\lambda\in\Rset$,
$$
\hat{\phi}(2^{-j}\lambda)2^{-\lowscale d}|\ufonc(2^{-j}\lambda)|\to 2^{\ell d}\; \sqrt{f^\ast(0)}\; |\lambda|^{-d}
\quad\text{as}\quad j\to\infty \;.
$$
Hence we obtain~(\ref{eq:Limitvfonc}) with
\begin{equation}
  \label{eq:vfoncInftyDWTcase}
\vfonc_{i,\infty}(\lambda)=2^{(\dj-\ell)/2+\ell d}\;\rme^{\rmi
  \lambda\dk2^{\dj-\ell}}\; \sqrt{f^\ast(0)}\; |\lambda|^{-d} \;\overline{\hat{\psi}(2^{\dj-\ell}\lambda)}
\end{equation}
for all  $i=2^{\ell-\dj}+\dk$ such that $\dj\in\{0,\dots,\ell\}$ and $\dk\in\{0,\dots,2^{\ell-\dj}-1\}$, and
$$
\rme^{\rmi\vphase_j(\lambda)}\eqdef\frac{\overline{\ufonc(\lambda)}}{|\ufonc(\lambda)|}\;.  
$$
Since~\eqref{eq:unfiBoundvfoncLocale},\eqref{eq:BoundvfoncLocaleNonZero} and~(\ref{eq:Limitvfonc}) hold and $n_j\to\infty$, 
we may apply Theorem~\ref{thm:CLTlocal} and obtain~(\ref{eq:CenteredZ}). 

Observe that~(\ref{eq:ni}) and $n2^{-\lowscale}\to\infty$ imply, for $j'=\lowscale+\dj\in\{\lowscale,\dots,\lowscale+\ell\}$,  
$$
n_{j'}=n2^{-j'}+O(1)\sim n2^{-\lowscale-\dj} \to\infty\quad\text{as}\quad n\to\infty\; .
$$
Since $(n2^{-\lowscale})^{1/2}n_{j'}^{-1}\sim(n2^{-\lowscale})^{1/2}(n2^{-\lowscale-\dj})^{-1}=
(n2^{-j})^{-1/2}2^{-\ell/2}2^{\dj}\sim n_j^{-1/2}2^{-\ell/2}2^{\dj}$,~(\ref{eq:AnDef}) yields, as $n\to\infty$,
$$
\sqrt{n2^{-\lowscale}}\;A_n \sim  n_j^{-1/2} \;  2^{-\ell/2}\; A_\infty \quad\text{with}\quad
A_\infty\eqdef \left[
\begin{array}{ccc}
0\dots&\dots0&\overbrace{1\dots1}^{2^\ell\text{ times }}\\
0\dots0&\overbrace{2\dots2}^{2^{\ell-1}\text{ times }}&0\dots0\\
\vdots&\vdots&\vdots\\
2^{\ell}\;\;0\dots0&0\dots0&0\dots0
\end{array}
\right]\;.
$$
The general term is $2^\dj$ for $\dj\in\{0,\dots,\ell\}$. Relations~(\ref{eq:varianceZlim}) and~(\ref{eq:RjprimDef})  give that, for
$j'=\lowscale+\dj\in\{\lowscale,\dots,\lowscale+\ell\}$, 
$$
\sqrt{n2^{-\lowscale}} \PE[R_{j'}] = O\left((n2^{-\lowscale})^{-1/2}\right) \to 0\quad\text{as}\quad n\to\infty\; .
$$
Applying~(\ref{eq:CenteredZ}),~(\ref{eq:MatrixExprDWTwrtZ}), the two last displays and Slutsky's lemma, we
get~(\ref{eq:JointCentralLimitEmpVar}) with
$$
\AVvarJoint{d}{}=2^{-\ell}\; A_\infty\Gamma A_\infty^T
=2^{-\ell}\;  \left[2^{\dj+\dj'}\sum_{\dk=0}^{2^{\ell-\dj}-1}\sum_{\dk'=0}^{2^{\ell-\dj'}-1}
\Gamma_{2^{\ell-\dj}+\dk,2^{\ell-\dj'}+\dk'}\right]_{0\leq \dj,\dj'\leq \ell}\;,
$$
where the indices $(\dj,\dj')$ run from $(0,0)$ (top left corner) to $(\dj,\dj')=(\ell,\ell)$ (bottom right corner) and
$\Gamma_{i,i'}$ is defined by~(\ref{eq:GammaDef}) with $\vfonc_{i,\infty}$ and  $\vfonc_{i',\infty}$ defined
by~(\ref{eq:vfoncInftyDWTcase}) for $i,i'\in1,\dots,\dimV=2^{\ell+1}-1$.
To conclude the proof, it remains to check that the entries of $\AVvarJoint{d}{\dj,\dj'}$ as defined above are equal to those
given in~(\ref{eq:CovMatrix}). We shall do that for $\dj'\geq \dj$ since the alternative case is obtained by observing that 
$\AVvarJoint{d}{\dj,\dj'}=\AVvarJoint{d}{\dj',\dj}$.   
Replacing $\Gamma_{i,i'}$ and then $\vfonc_{i,\infty}$  and  $\vfonc_{i',\infty}$  by these expressions and denoting 
$$
\lambda_p\eqdef\lambda+2p\pi,\quad\lambda\in\Rset,\;p\in\Zset\;,
$$
we get, for $0\leq \dj,\dj'\leq \ell$, 
\begin{align}
\nonumber
  \AVvarJoint{d}{\dj,\dj'}&=2^{-\ell+\dj+\dj'}\sum_{\dk=0}^{2^{\ell-\dj}-1}\sum_{\dk'=0}^{2^{\ell-\dj'}-1}
4\pi\;(f^\ast(0))^2\\
\nonumber
&\times\int_{-\pi}^\pi2^{\dj+\dj'-2\ell+4\ell d}
\left|\sum_{p\in\Zset}\rme^{\rmi\lambda_p(\dk2^{\dj-\ell}-\dk'2^{\dj'-\ell})}
\;\left|\lambda_p\right|^{-2d}\overline{\hat{\psi}(2^{\dj-\ell}\lambda_p)}\hat{\psi}(2^{\dj'-\ell}\lambda_p)\right|^2\;\rmd\lambda \\
  \label{eq:Lambdad1}
&=(f^\ast(0))^2\;4\pi\;2^{2(\dj+\dj')+\ell(4d-3)}
\sum_{\dk=0}^{2^{\ell-\dj}-1} \int_{-\pi}^\pi G_{\dj,\dj',\dk}(\lambda) \;\rmd\lambda\;, 
\end{align}
where $G_{\dj,\dj',\dk}$ is a $(2\pi)$-periodic function defined by
$$
G_{\dj,\dj',\dk}(\lambda)\eqdef\sum_{\dk'=0}^{2^{\ell-\dj'}-1}
\left|\sum_{p\in\Zset}\rme^{\rmi\,\lambda_p(\dk2^{\dj-\ell}-\dk'2^{\dj'-\ell})}g_{\dj,\dj'}(2^{\dj'-\ell}\lambda_p)\right|^2
  \;,
$$
with
$$
g_{\dj,\dj'}(\lambda)\eqdef|2^{\ell-\dj'}\lambda|^{-2d}\overline{\hat{\psi}(2^{\dj-\dj'}\lambda)}\hat{\psi}(\lambda),\quad\lambda\in\Rset \; .
$$
Writing $p=2^{\ell-\dj'}q+r$ with $q\in\Zset$ and $r\in\{0,\dots,2^{\ell-\dj'}-1\}$ and transforming a sum over $p$ into a
sum over $q$ and $r$, we get
\begin{align*}
G_{\dj,\dj',\dk}(\lambda)
&=\sum_{\dk'=0}^{2^{\ell-\dj'}-1}\left|\sum_{r=0}^{2^{\ell-\dj'}-1}
\rme^{\rmi\,\lambda_r(\dk2^{\dj-\ell}-\dk'2^{\dj'-\ell})}\sum_{q\in\Zset}
\rme^{\rmi\,2^{\dj-\dj'}\dk2q\pi}g_{\dj,\dj'}(2^{\dj'-\ell}\lambda_r+2q\pi)\right|^2\\
&=\sum_{\dk'=0}^{2^{\ell-\dj'}-1}\left|\sum_{r=0}^{2^{\ell-\dj'}-1}
\rme^{-\rmi\,\lambda_r\dk'2^{\dj'-\ell}}h_{\dj,\dj',\dk}(2^{\dj'-\ell}\lambda_r)\right|^2\;,
\end{align*}
where
$$
h_{\dj,\dj',\dk}(\lambda)\eqdef\sum_{q\in\Zset}\rme^{\rmi\,2^{\dj-\dj'}\dk\lambda_q} g_{\dj,\dj'}(\lambda_q) \;.
$$
Hence
$$
G_{\dj,\dj',\dk}(\lambda)=\sum_{\dk'=0}^{2^{\ell-\dj'}-1}\sum_{r=0}^{2^{\ell-\dj'}-1}\sum_{r'=0}^{2^{\ell-\dj'}-1}
\rme^{-\rmi\,2\pi(r-r')\dk'2^{\dj'-\ell}} 
h_{\dj,\dj',\dk}(2^{\dj'-\ell}\lambda_r)\overline{h_{\dj,\dj',\dk}(2^{\dj'-\ell}\lambda_{r'})}\;.
$$
In the last display, observe that $\dk'$ only appear in the complex exponential argument. 
Moreover we have $\sum_{\dk'=0}^{2^{\ell-\dj'}-1}\rme^{-\rmi\,2\pi(r-r')\dk'2^{\dj'-\ell}}=0$ except for $r=r'$
in which case it equals $2^{\ell-\dj'}$. Hence, 
$$
G_{\dj,\dj',\dk}(\lambda)=2^{\ell-\dj'}\sum_{r=0}^{2^{\ell-\dj'}-1}
\left|h_{\dj,\dj',\dk}(2^{\dj'-\ell}\lambda_r)\right|^2 \; .
$$
Applying \cite[Lemma~1]{roueff:taqqu:2008a}
with $g=\left|h_{\dj,\dj',\dk}\right|^2$ and $\decim=2^{\ell-\dj'}$ gives
\begin{align*}
\int_{-\pi}^\pi G_{\dj,\dj',\dk}(\lambda) \;\rmd\lambda 
&=2^{\ell-\dj'}
\int_{-\pi}^\pi \sum_{r=0}^{2^{\ell-\dj'}-1} \left|h_{\dj,\dj',\dk}(2^{\dj'-\ell}\lambda_r)\right|^2 \;\rmd\lambda \\
&= 2^{2\ell-2\dj'} \; 
\int_{-\pi}^\pi\left|h_{\dj,\dj',\dk}(\lambda)\right|^2\;\rmd\lambda\;.
\end{align*}
Inserting this equality in~(\ref{eq:Lambdad1}) and using that $\AVvarJoint{d}{\dj,\dj'}=\AVvarJoint{d}{\dj',\dj}$, we get,
for all $0\leq \dj\leq \dj' \leq\ell$, 
\begin{multline*}
  \AVvarJoint{d}{\dj,\dj'}=(f^\ast(0))^2\;4\pi\;2^{2\dj+4d\dj'-\ell} \\
  \times \sum_{\dk=0}^{2^{\ell-\dj}-1}  \int_{-\pi}^\pi
\left|\sum_{q\in\Zset}|\lambda_q|^{-2d} \rme^{\rmi\,2^{-(\dj'-\dj)}\dk\lambda_q}
\overline{\hat{\psi}(2^{-(\dj'-\dj)}\lambda_q)}\hat{\psi}(\lambda_q) \right|^2\;\rmd\lambda\;.  
\end{multline*}
For $\dk\in\{0,\dots,2^{\ell-\dj}-1\}$, we write $\dk=\dk'+k2^{\dj'-\dj}$ with $\dk'\in\{0,\dots,2^{\dj'-\dj}-1\}$
and $k\in\{0,\dots,2^{\ell-\dj'}-1\}$ and transform the sum over $\dk$ into a sum over $\dk'$ and $k$. 
But since
$\exp\left\{\rmi2^{-(\dj'-\dj)}\dk\lambda_q\right\}=\exp\left\{\rmi2^{-(\dj'-\dj)}\dk'\lambda_q\right\}\exp\left\{\rmi k\lambda\right\}$,
    and $\sum_{k=0}^{2^{\ell-\dj'}-1}\left|\rme^{\rmi k\lambda}\right|^2=2^{\ell-\dj'}$, we obtain
\begin{multline*}
  \AVvarJoint{d}{\dj,\dj'}=(f^\ast(0))^2\;4\pi\;2^{4d\dj'+\dj}2^{-(\dj'-\dj)} \\
  \times \sum_{\dk'=0}^{2^{\dj'-\dj}-1}  \int_{-\pi}^\pi
\left|\sum_{q\in\Zset}|\lambda_q|^{-2d} \rme^{\rmi\,2^{-(\dj'-\dj)}\dk'\lambda_q}
\overline{\hat{\psi}(2^{-(\dj'-\dj)}\lambda_q)}\hat{\psi}(\lambda_q) \right|^2\;\rmd\lambda\;.  
\end{multline*}
Relation~(\ref{eq:CovMatrix}) finally follows by observing that the vector with entries
$$
2^{-(\dj'-\dj)/2}\left\{\rme^{\rmi\,2^{-(\dj'-\dj)}\dk'\lambda_q},\;\dk'=0,\dots,2^{\dj'-\dj}-1\right\}
$$ 
is precisely $\be_{\dj'-\dj}(\lambda_q)$.
\end{proof}

To obtain a result valid for an asymptotically infinite weighted sum of the empirical scalogram 
$\{\hvj{\lowscale+\dj}{n_{\lowscale+\dj}}-\vj{\lowscale+\dj}{d,f^\ast}{2},\;\dj\geq0\}$ as in
Theorem~\ref{thm:weakConvWeightedScalogramLimit} below, we need a bound for the 
covariance $\AVvarJoint{d}{\dj,\dj'}$ defined in~(\ref{eq:CovMatrix}) and a bound for the centered empirical scalogram. 
The two following results provide the bounds.

\begin{lemma}
  \label{lem:bDensBound}
Suppose that $\psi$ satisfies~\ref{item:Wreg}--\ref{item:MVM} and let $d\in(1/2-\alpha,M]$. Then, there exists $C$ only
depending on $d$ and $\psi$ such that, for all $\dj\geq0$,
$$
\int_{-\pi}^{\pi} \left|\bdensasymp{\dj}{\lambda}{d} \right|^2 \, \rmd\lambda\leq C\; 2^{\dj(1/2-2d)} \; .
$$
\end{lemma}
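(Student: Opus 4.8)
The plan is to reduce the squared Euclidean norm of the $2^\dj$-dimensional vector $\bdensasymp{\dj}{\lambda}{d}$ to a scalar integral over $\Rset$ by exploiting the orthogonality of the discrete exponentials hidden in $\be_\dj$, and then to bound that integral by inserting sharp pointwise estimates on $\hat\psi$. Write $g_\dj(\xi)\eqdef|\xi|^{-2d}\,\overline{\hat\psi(\xi)}\,\hat\psi(2^{-\dj}\xi)$, so that $\bdensasymp{\dj}{\lambda}{d}=\sum_{l\in\Zset}\be_\dj(\lambda+2l\pi)\,g_\dj(\lambda+2l\pi)$ and the $m$-th component is $2^{-\dj/2}\sum_{l\in\Zset}\rme^{-\rmi m2^{-\dj}(\lambda+2l\pi)}\,g_\dj(\lambda+2l\pi)$. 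Expanding the squared norm as a sum over $m\in\{0,\dots,2^\dj-1\}$ and summing the geometric series $\sum_m\rme^{-\rmi m2^{-\dj}\cdot2\pi(l-l')}$, which equals $2^\dj$ when $l\equiv l'\pmod{2^\dj}$ and vanishes otherwise, collapses the double sum over $(l,l')$. Integrating over $\lambda\in(-\pi,\pi)$ and unfolding the $2\pi$-periodic tiling then yields the exact identity
\[
\int_{-\pi}^\pi\left|\bdensasymp{\dj}{\lambda}{d}\right|^2\rmd\lambda=\sum_{s\in\Zset}\int_\Rset g_\dj(\xi)\,\overline{g_\dj(\xi+2^{\dj+1}\pi s)}\,\rmd\xi\le\int_0^{2^{\dj+1}\pi}\Big(\sum_{s\in\Zset}\left|g_\dj(\xi+2^{\dj+1}\pi s)\right|\Big)^2\rmd\xi .
\]
The gain over the crude triangle-inequality bound $|\bdensasymp{\dj}{\lambda}{d}|\le\sum_l|g_\dj(\lambda+2l\pi)|$ is decisive: the effective period here is $2^{\dj+1}\pi$, not $2\pi$, and this finer structure is exactly what keeps the estimate below $2^{\dj(1/2-2d)}$. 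The periodization bookkeeping can alternatively be phrased through \cite[Lemma~1]{roueff:taqqu:2008a}.

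Next I would record the pointwise spectral bound on the wavelet. Under \ref{item:Wreg}--\ref{item:MVM} the function $\hat\psi$ is smooth (compact support), has a zero of order $M$ at the origin (the $M$ vanishing moments), and decays like $(1+|\xi|)^{-\alpha}$; combining these, as in \cite[Proposition~3]{moulines-roueff-taqqu-2007a}, there is $C>0$ with
\[
|\hat\psi(\xi)|\le C\,\frac{|\xi|^M}{(1+|\xi|)^{M+\alpha}},\qquad\xi\in\Rset .
\]
Inserting this for both factors of $g_\dj$ gives $|g_\dj(\xi)|\le C\,2^{-\dj M}\,|\xi|^{2M-2d}(1+|\xi|)^{-M-\alpha}(1+2^{-\dj}|\xi|)^{-M-\alpha}$, which is continuous and tends to $0$ at the origin because $d\le M$, and is square-integrable at infinity because $d>1/2-\alpha$.

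I would then estimate the periodized integral by splitting the frequency line according to the two characteristic scales of $g_\dj$, namely $|\xi|\le1$, $1\le|\xi|\le2^\dj$ and $|\xi|\ge2^\dj$, which separate the ``fine'' bump of $\hat\psi(\xi)$ near $|\xi|\sim1$ from the ``coarse'' bump of $\hat\psi(2^{-\dj}\xi)$ near $|\xi|\sim2^\dj$. On each region one bounds $\sum_s|g_\dj(\xi+2^{\dj+1}\pi s)|$ and integrates, collecting the powers of $2^\dj$; the condition $\alpha+d>1/2$ guarantees convergence of the resulting power-law integrals and series, while $d\le M$ handles the origin. The diagonal term $s=0$ equals $\|g_\dj\|_{\ltwo(\Rset)}^2=O(2^{\dj(1-4d-2\alpha)}+2^{-2\dj M})$, and since $\alpha+d>1/2$ forces $1-4d-2\alpha\le1/2-2d$ while $d\le M$ forces $-2M\le1/2-2d$, this term is $\le C\,2^{\dj(1/2-2d)}$.

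The main obstacle is the control of the off-diagonal terms $s\ne0$. Because the two bumps of $g_\dj$ sit at the non-commensurate locations $1$ and $2^\dj$ while the period is $2^{\dj+1}\pi$, no shift by $2^{\dj+1}\pi s$ with $s\ne0$ aligns a bump with another, so every off-diagonal overlap occurs only in the polynomial tails; the delicate point is to make this quantitative and uniform in $\dj$, showing that $\sum_{s\ne0}\int_\Rset|g_\dj(\xi)|\,|g_\dj(\xi+2^{\dj+1}\pi s)|\,\rmd\xi$ is summable in $s$ and of the same order $O(2^{\dj(1-4d-2\alpha)})$ as the diagonal. Once this is done, every contribution is dominated by $2^{\dj(1/2-2d)}$, and the lemma follows with a constant depending only on $d$ and $\psi$.
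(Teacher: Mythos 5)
Your argument is correct in outline and takes a genuinely different route from the paper's. The paper disposes of the lemma in two lines: besides simply citing Relation~(72) in \cite{moulines-roueff-taqqu-2007c}, it observes that by Theorem~\ref{theo:genericJointConvResult} the quantity $\AVvarJoint{d}{\dj,\dj'}$ in~(\ref{eq:CovMatrix}) is the covariance function of the limit Gaussian process $\hvjLIM{d}{\centerdot}$, so the Cauchy--Schwarz inequality $|\AVvarJoint{d}{0,\dj}|^2\leq\AVvarJoint{d}{0,0}\,\AVvarJoint{d}{\dj,\dj}$ yields
\begin{equation*}
2^{4d\dj}\int_{-\pi}^{\pi}\left|\bdensasymp{\dj}{\lambda}{d}\right|^2\rmd\lambda
\;\leq\;2^{(2d+1/2)\dj}\int_{-\pi}^{\pi}\left|\bdensasymp{0}{\lambda}{d}\right|^2\rmd\lambda\;,
\end{equation*}
reducing everything to the boundedness of the scalar periodization $\bdensasymp{0}{\lambda}{d}=\sum_{l}|\lambda+2l\pi|^{-2d}|\hat{\psi}(\lambda+2l\pi)|^2$ (cited from Remark~1 in \cite{moulines-roueff-taqqu-2007b}), which your own pointwise bound $|\hat{\psi}(\xi)|\leq C|\xi|^{M}(1+|\xi|)^{-M-\alpha}$ gives immediately using $d\leq M$ and $2(\alpha+d)>1$. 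There is no circularity in the paper's shortcut, since the lemma is only used downstream of Theorem~\ref{theo:genericJointConvResult}; still, your direct computation buys two things: it is fully self-contained (it relies neither on the CLT, which is what certifies that $\AVvarJoint{d}{\cdot,\cdot}$ is positive semidefinite, nor on external references), and it yields the strictly sharper order $O(2^{\dj(1-4d-2\alpha)}+2^{-2\dj M})$, which improves on $2^{\dj(1/2-2d)}$ by a factor of at least $2^{-\dj/2}$ --- the Cauchy--Schwarz route cannot detect the genuine inter-scale decorrelation. Your orthogonality/unfolding identity and the diagonal estimate check out. The one step you announce but do not execute, the off-diagonal control, does close with the bounds you already have: for $|s|\geq1$ and $|\xi|\leq2^{\dj}\pi$ one has $|\xi+2^{\dj+1}\pi s|\geq2^{\dj}\pi|s|$, hence $|g_\dj(\xi+2^{\dj+1}\pi s)|\leq C\,2^{-\dj(2d+\alpha)}|s|^{-2d-2\alpha}$; combined with $\|g_\dj\|_{\mathrm{L}^1(\Rset)}=O(2^{\dj(1-2d-\alpha)}+2^{-\dj M})$ (up to a logarithmic factor at the crossover) and $\sum_{s\neq0}|s|^{-2d-2\alpha}<\infty$, guaranteed by $d>1/2-\alpha$, this part contributes $O(2^{\dj(1-4d-2\alpha)})$; the symmetric region $|\xi+2^{\dj+1}\pi s|\leq2^{\dj}\pi$ is identical after a change of variables, and the region where both arguments exceed $2^{\dj}\pi$ in modulus is smaller still. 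One cosmetic slip: at $d=M$ your $g_\dj$ is merely bounded near the origin, not vanishing, which is all the argument needs.
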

\begin{proof}
See Relation~(72) in~\cite{moulines-roueff-taqqu-2007c}. An alternative is to use that
$\AVvarJoint{d}{0,\dj}=\PCov\left(\hvjLIM{d}{0},\hvjLIM{d}{\dj}\right)$ 
and thus the Cauchy-Schwarz Inequality yields $|\AVvarJoint{d}{0,\dj}|^2\leq |\AVvarJoint{d}{0,0}||\AVvarJoint{d}{\dj,\dj}|$.  
Using~(\ref{eq:CovMatrix}), we get, setting $f^\ast(0)=1$
$$
4\pi2^{4d\dj}\int_{-\pi}^{\pi} \left|\bdensasymp{\dj}{\lambda}{d} \right|^2\, \rmd\lambda\leq 4\pi2^{(2d+1/2)\dj}\;\int_{-\pi}^{\pi}
\left|\bdensasymp{0}{\lambda}{d} \right|^2 \, \rmd\lambda\; .
$$
The results follows from the fact that $\left|\bdensasymp{0}{\lambda}{d} \right|$ is bounded for  $d\in(1/2-\alpha,M]$
under~\ref{item:Wreg}--\ref{item:MVM}, see Remark~1 in~\cite{moulines-roueff-taqqu-2007b}.  
\end{proof} 
\begin{lemma}\label{lem:BoundScalogram} 
Let $X$ be an $M(d)$ process with short-range spectral density $f^\ast$ 
and suppose that Assumption~\ref{assump:linear} holds.  
Assume that~\allWA\ hold with Condition~(\ref{eq:CondMalphadKsansBeta}) on $M$ and $\alpha$. Then, 
there exists a positive constant $C$ such that,  for all $n\geq1$ and $j\in\{0,1,\dots,\maxscale\}$, 
\begin{equation}
  \label{eq:VarScaloBothCases}
\PE\left[\left|\hvj{j}{n_j}-\vj{j}{d,f^\ast}{2}\right|\right]\leq C \left\{2^{(1/2+2d) j}\,n^{-1/2} + 2^{j(1-2\alpha)}\right\} \; .
\end{equation}
If moreover Condition~(\ref{eq:CondFstarBounded}) holds on $f^\ast$, one has
\begin{equation}
  \label{eq:VarScaloBoundedCase}
\PVar\left(\hvj{j}{n_j}\right)\leq C^2 \, 2^{(1+4d) j}\,n^{-1} \; .
\end{equation}
\end{lemma}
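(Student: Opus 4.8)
The plan is to reduce both estimates to bounds on the autocovariances of the decimated wavelet coefficients. I use the representation~\eqref{eq:wavLinDecim}, $\dwt_{j,k}=\sum_{t}\useq_j(k2^j-t)\,\xi_t$, with $\ufonc_j=H_j\ufonc$ and $|\ufonc|^2=f$ as in~\eqref{eq:ufoncANDf}, and write $r_j(h)\eqdef\PE[\dwt_{j,0}\dwt_{j,h}]$ for the lag-$h$ autocovariance of the stationary sequence $\{\dwt_{j,k}\}_{k}$; in particular $\{\dwt_{j,k}\}_k$ is centered, so $\hvj{j}{n_j}$ is \emph{unbiased}, $\PE[\hvj{j}{n_j}]=r_j(0)=\vj{j}{d,f^\ast}{2}$. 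The starting point is the covariance-of-squares identity for a linear process: for $0\le k,k'\le n_j-1$,
\[
\PCov(\dwt_{j,k}^2,\dwt_{j,k'}^2)=2\,r_j^2(k-k')+\kappa_4\sum_{t}\useq_j(k2^j-t)^2\,\useq_j(k'2^j-t)^2 ,
\]
so that $\PVar(\hvj{j}{n_j})$ splits into a Gaussian part $2n_j^{-2}\sum_{k,k'}r_j^2(k-k')$ and a fourth-cumulant part.

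For~\eqref{eq:VarScaloBoundedCase} I treat these two parts separately. Since $\{k2^j-t:k\in\Zset\}\subset\Zset$, one has $\sum_{k}\useq_j(k2^j-t)^2\le\|\useq_j\|_2^2=\vj{j}{d,f^\ast}{2}$, whence the cumulant part is at most $\kappa_4\,n_j^{-1}(\vj{j}{d,f^\ast}{2})^2$. For the Gaussian part I use $2n_j^{-2}\sum_{k,k'}r_j^2(k-k')\le 2n_j^{-1}\sum_{h}r_j^2(h)=4\pi\,n_j^{-1}\int_{-\pi}^{\pi}g_j^2\,\rmd\lambda$, where $g_j(\lambda)=2^{-j}\sum_{p=0}^{2^j-1}|\ufonc_j(2^{-j}(\lambda+2p\pi))|^2$ is the spectral density of $\{\dwt_{j,k}\}_k$. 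Under~\eqref{eq:CondFstarBounded}, the bounds on $H_j$ recalled in the proof of Theorem~\ref{theo:genericJointConvResult}, together with $\hat\phi(0)=1$ and the vanishing moments of $\psi$ (which cancel the singularity of $f$ at the origin), yield the uniform estimate $\sup_\lambda g_j(\lambda)\le C\,2^{2dj}$, exactly as in the boundedness of $\bdensasymp{0}{\lambda}{d}$ invoked for Lemma~\ref{lem:bDensBound}; hence $\int_{-\pi}^{\pi}g_j^2\,\rmd\lambda\le C\,2^{4dj}$. Combining this with $\vj{j}{d,f^\ast}{2}\le C2^{2dj}$ and $n_j\ge c\,n2^{-j}$ (for $j\le\maxscale$), both parts are $O(2^{(1+4d)j}n^{-1})$, which is~\eqref{eq:VarScaloBoundedCase}.

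For the general bound~\eqref{eq:VarScaloBothCases} I split the wavelet coefficient spectrally at the frequency $\varepsilon$ on which $f^\ast$ is controlled, writing $\dwt_{j,k}=\dwt'_{j,k}+\dwt''_{j,k}$, where $\dwt'$ carries the Fourier mass on $\{|\lambda|\le\varepsilon\}$ and $\dwt''$ that on $\{|\lambda|>\varepsilon\}$; the two components are uncorrelated. This gives
\[
\hvj{j}{n_j}-\vj{j}{d,f^\ast}{2}=T_1+T_2+T_3 ,
\]
where $T_1,T_3$ are the centered empirical second moments of $(\dwt')^2,(\dwt'')^2$ and $T_2=2n_j^{-1}\sum_k\dwt'_{j,k}\dwt''_{j,k}$ has $\PE[T_2]=0$. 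As $f^\ast$ is bounded on $[-\varepsilon,\varepsilon]$ by continuity, the argument of the bounded case applies to $\dwt'$, giving $\PE|T_1|\le(\PVar T_1)^{1/2}\le C2^{(1/2+2d)j}n^{-1/2}$. For $T_3$ I only use stationarity and the tail energy: $\PE|T_3|\le 2\,\PE[(\dwt''_{j,0})^2]$, and $\PE[(\dwt''_{j,0})^2]=\int_{|\lambda|>\varepsilon}|H_j(\lambda)|^2f(\lambda)\,\rmd\lambda\le C2^{j(1-2\alpha)}$ by the bound $|H_j(\lambda)|\le C2^{j(1/2-\alpha)}|\lambda|^{-\alpha}$ away from the origin and the integrability of $f$ there.

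The delicate term is $T_2$, and it is the main obstacle: pointwise or Cauchy--Schwarz bounds on $\PE|T_2|$ produce the order $2^{j(1/2+d-\alpha)}$, which is \emph{not} dominated by the right-hand side of~\eqref{eq:VarScaloBothCases}. I instead control $T_2$ through its variance, whose Gaussian part is $4n_j^{-2}\sum_{k,k'}r'_j(k-k')\,r''_j(k-k')$, with $r'_j,r''_j$ the autocovariances of $\dwt',\dwt''$. Here I exploit the asymmetry between the factors: the tail factor enters only through $|r''_j(h)|\le r''_j(0)=\PE[(\dwt''_{j,0})^2]\le C2^{j(1-2\alpha)}$, while the low-frequency factor is \emph{short-range dependent} after the wavelet transform and decimation — its decimated spectral density is bounded and smooth, the singularity of $f$ being removed by the vanishing moments of $\psi$ — so that $\sum_{|h|<n_j}|r'_j(h)|\le C2^{2dj}$. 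This yields $\PVar(T_2)\le Cn_j^{-1}2^{j(1-2\alpha)}2^{2dj}$, hence $\PE|T_2|\le Cn^{-1/2}2^{j(1-\alpha+d)}$, and the hypothesis $d>1/2-\alpha$ in~\eqref{eq:CondMalphadKsansBeta} gives $2^{j(1-\alpha+d)}\le 2^{j(1/2+2d)}$, so $\PE|T_2|$ is absorbed into the first term of~\eqref{eq:VarScaloBothCases}. The fourth-cumulant part of $\PVar(T_2)$ is handled analogously and is of no larger order; summing the bounds on $T_1,T_2,T_3$ gives~\eqref{eq:VarScaloBothCases}.
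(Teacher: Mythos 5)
Your proof has the right architecture and, term by term, the right orders: it is essentially a self-contained reconstruction of what the paper obtains by citation, since the paper disposes of the bounded case by invoking Lemmas~5 and~6 of \cite{roueff:taqqu:2008a} and of the unbounded case by invoking Proposition~4 there, whose remainder bound $\PE[|R_j|]\leq C[n_j^{1/2}I_j+I_j^{1/2}]$ corresponds exactly to your $T_3$ and $T_2$ estimates. Your bounded-case argument (covariance-of-squares identity, $\sup_\lambda g_j\leq C2^{2dj}$, Parseval, $n_j\asymp n2^{-j}$) is correct, as are your $T_1$ and $T_3$ bounds. The genuine gap is in $T_2$: everything there rests on the inequality $\sum_{|h|<n_j}|r'_j(h)|\leq C2^{2dj}$, which you justify only by saying that the decimated low-pass spectral density $g'_j$ is ``bounded and smooth''. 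Neither half of that justification stands. Smoothness is false as stated: your sharp cutoff $\1(|\lambda|\leq\varepsilon)$ makes $g'_j$ discontinuous, and, more fundamentally, the lemma does not assume Assumption~\ref{H:Lbeta}, so $f^\ast$ is merely continuous at the origin and carries no regularity on $(0,\varepsilon]$ that could transfer to $g'_j$. Boundedness alone does not give absolute summability of autocovariances: from $\sup g'_j\leq C2^{2dj}$ one only gets, via Cauchy--Schwarz and Parseval, $\sum_{|h|<n_j}|r'_j(h)|\leq C\sqrt{n_j}\,2^{2dj}$, and for a bounded but wildly oscillating $f^\ast$ this $\sqrt{n_j}$ loss is essentially unavoidable; so the pivotal claim is unproven and, at this level of generality, false.

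Fortunately your conclusion survives, in two ways. First, even the lossy bound suffices: inserting $\sum_{|h|<n_j}|r'_j(h)|\leq C\sqrt{n_j}\,2^{2dj}$ into your variance computation gives $\PE[|T_2|]\leq C n^{-1/4}2^{(3/4+d-\alpha)j}$, which is exactly the geometric mean of the two terms on the right-hand side of~\eqref{eq:VarScaloBothCases} and hence is dominated by their sum. Second, the sharper bound you claimed can be proved by replacing summability with positivity: since the cross-covariances of $\dwt'$ and $\dwt''$ vanish and $g'_j,g''_j\geq0$,
\begin{equation*}
\sum_{k,k'=0}^{n_j-1}r'_j(k-k')\,r''_j(k-k')
= n_j\int_{-\pi}^{\pi}\int_{-\pi}^{\pi} g'_j(\lambda)\,g''_j(\mu)\,F_{n_j}(\lambda+\mu)\,\rmd\lambda\,\rmd\mu
\leq 2\pi\, n_j\,\Bigl(\sup_{\lambda} g'_j(\lambda)\Bigr)\int_{-\pi}^{\pi}g''_j(\mu)\,\rmd\mu
\leq C\, n_j\, 2^{2dj}\,2^{(1-2\alpha)j}\;,
\end{equation*}
where $F_n\geq0$ is the Fej\'er kernel, and the fourth-cumulant part of $\PVar(T_2)$ is of the same order by
$\bigl(\sum_k a_k(t)b_k(t)\bigr)^2\leq\bigl(\sum_k a_k^2(t)\bigr)\bigl(\sum_k b_k^2(t)\bigr)$ together with
$\sup_t\sum_k a_k^2(t)\leq C2^{2dj}$. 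This yields $\PVar(T_2)\leq Cn_j^{-1}2^{2dj}2^{(1-2\alpha)j}$ rigorously, hence your $\PE[|T_2|]\leq Cn^{-1/2}2^{(1-\alpha+d)j}$, absorbed as you say using $d>1/2-\alpha$ from~\eqref{eq:CondMalphadKsansBeta}. With this one repair your argument is complete and constitutes a more elementary alternative to the paper's appeal to \cite{roueff:taqqu:2008a}.
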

\begin{proof}
We use the same notations as in Theorem~\ref{theo:genericJointConvResult} to express $\hvj{j}{n_j}$
in terms of a decimated linear process, but since here only one scale needs to be considered, we take  
 $\lowscale=j=j'$ (hence $\dj=\ell=0$ and $i=1$). In this case~(\ref{eq:scaloLinearProc}) reads as
\begin{equation}
  \label{eq:CenteredScalogram}
2^{-2jd}(\hvj{j}{n_j}-\vj{j}{d,f^\ast}{2}) =
n_j^{-1}\sum_{k=0}^{n_j-1}\left\{Z_{1,j,k}^2-\PE[Z_{1,j,k}^2]\right\} \;,  
\end{equation}
where $Z_{1,j,k}=\sum_{t\in\Zset}\vseq_{1,j}(t)\xi_t$ with $\vseq_{1,j}(t) = 2^{-j
  d}\useq_{j}(t)$. If~(\ref{eq:CondFstarBounded}) holds, then $\vseq_{1,j}$ satisfies~(\ref{eq:unfiBoundvfoncLocale}) with $\varepsilon=\pi$
and~(\ref{eq:Limitvfonc}) (see the proof of Theorem~\ref{theo:genericJointConvResult}) and, by Lemmas~5
and~6 in~\cite{roueff:taqqu:2008a}, we get
\begin{equation}
  \label{eq:varscaloBonCas}
\sup_{j,n}\PVar\left(n_j^{-1/2}\sum_{k=0}^{n_j-1}Z_{1,j,k}^2\right) < \infty \; .
\end{equation}  
Since $n_j\asymp n2^{-j}$ for $j\in\{0,1,\dots,\maxscale\}$,~(\ref{eq:VarScaloBoundedCase}) follows.

If~(\ref{eq:CondFstarBounded}) does not hold,  
$\vseq_{1,j}$ satisfies~(\ref{eq:unfiBoundvfoncLocale}) for some $\varepsilon>0$ which may no longer be taken equal to $\pi$
(as a consequence of~(\ref{eq:localBoundvfoncCLTwav}) in the proof of Theorem~\ref{theo:genericJointConvResult}) and,
applying ~\cite[Proposition~4]{roueff:taqqu:2008a} with $\lambda_{1,j}=0$, 
we get
$$
2^{-2jd}(\hvj{j}{n_j}-\vj{j}{d,f^\ast}{2}) =n_j^{-1/2}\left[n_j^{-1/2}\sum_{k=0}^{n_j-1} \{\widehat{Z}_{1,j,k}^{2} -\PE[\widehat{Z}_{1,j,k}^{2}]\}
+R_j\right] \;,
$$
where $\widehat{Z}_{1,j,k}$  satisfies~(\ref{eq:unfiBoundvfoncLocale}) with $\varepsilon=\pi$ and~(\ref{eq:Limitvfonc}) and
hence~(\ref{eq:varscaloBonCas}) and $R_j$ satisfies, for some positive constant $C$ not depending on $j$, 
\begin{equation}\label{eq:RjBound}
\PE\left[\left|R_j\right|\right]\leq C\;
\left[n_j^{1/2}I_j+I_j^{1/2}\right]\;,
\end{equation}
where
\begin{equation}\label{eq:Ijdef}
I_j\eqdef\int_{0}^\pi\1(|\lambda-\lambda_{1,\infty}|>\varepsilon)\;\left|\vfonc_{1,j}(\lambda)\right|^2\;\rmd\lambda \;.
\end{equation}
Since  $f^\ast$ is always integrable away of the origin, the bound~(\ref{eq:localBoundvfoncCLTwav2}) implies (recall that
here $\lowscale=j$)
$$
\int_{\varepsilon}^\pi|\vfonc_{1,j}(\lambda)|^2\;\rmd\lambda \leq C\; 2^{(1-2\alpha-2d)j} \; .
$$
Hence, 
we obtain, for some constant $C$ not depending on $j$ nor $n$, 
\begin{align*}
\PE\left[2^{-2jd}|\hvj{j}{n_j}-\vj{j}{d,f^\ast}{2}| \right]&\leq C\;n_j^{-1/2}\left[1+n_j^{1/2}2^{(1-2\alpha-2d)j} +
  2^{(1-2\alpha-2d)j/2} \right] \\
&\leq  C\;n_j^{-1/2}\left[2+n_j^{1/2}2^{(1-2\alpha-2d)j} \right] \;, 
\end{align*}
where we used $1-2\alpha-2d<0$ in Condition~(\ref{eq:CondMalphadKsansBeta}). Since $n_j\asymp n2^{-j}$
Relation~(\ref{eq:VarScaloBothCases}) follows.
\end{proof}

We now prove the main result of this section.

\begin{theorem}\label{thm:weakConvWeightedScalogramLimit}
Let $\{w_{\dj}(n),n,\dj\geq0\}$ be an array of real numbers such that 
$w_{\dj}(n)\to w_{\dj}$ for all $\dj\geq0$ as $n\to\infty$ and  
\begin{equation}
  \label{eq:poidsDomination}
  \lim_{\ell\to\infty}\sum_{\dj>\ell}\sup_{n\geq0}|w_{\dj}(n)|2^{(1/2+2d)\dj} = 0 \;.
\end{equation}
Then, under the assumptions of Theorem~\ref{theo:genericJointConvResult}, as $n\to\infty$,
\begin{equation}
\label{eq:WeightedScalogramLimit}
\sqrt{n2^{-\lowscale}}2^{-2\lowscale d}\sum_{\dj=0}^{\maxscale-\lowscale}w_{\dj}(n)
\{\hvj{\lowscale+\dj}{n_{\lowscale+\dj}}-\vj{\lowscale+\dj}{d,f^\ast}{2}\} \cl
 \calN\left(0, \sum_{\dj,\dj'\geq0}w_{\dj}\AVvarJoint{d}{\dj,\dj'}w_{\dj'} \right) \; ,
\end{equation}
where $\maxscale$ is defined in~(\ref{eq:defMaxscale}).
\end{theorem}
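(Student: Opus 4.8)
The plan is to combine the finite-dimensional central limit theorem of Theorem~\ref{theo:genericJointConvResult} with a truncation (approximation-by-finite-sums) argument, controlling the tail through the moment bounds of Lemma~\ref{lem:BoundScalogram} and the covariance bound underlying Lemma~\ref{lem:bDensBound}. Write
\begin{equation*}
T_n \eqdef \sqrt{n2^{-\lowscale}}\,2^{-2\lowscale d}\sum_{\dj=0}^{\maxscale-\lowscale}w_{\dj}(n)\,\{\hvj{\lowscale+\dj}{n_{\lowscale+\dj}}-\vj{\lowscale+\dj}{d,f^\ast}{2}\}
\end{equation*}
for the left-hand side of~(\ref{eq:WeightedScalogramLimit}) and, for each fixed $\ell\geq0$, let $T_n^{(\ell)}$ denote the same expression with the sum restricted to $0\leq\dj\leq\ell$. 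Since $\lowscale(n)\to\infty$ while $n2^{-\lowscale}\to\infty$, the number of available scales $\maxscale-\lowscale\to\infty$, so for each fixed $\ell$ the truncation is eventually legitimate and we may decompose $T_n=T_n^{(\ell)}+(T_n-T_n^{(\ell)})$.

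For the \emph{finite part} I would apply the joint convergence~(\ref{eq:JointCentralLimitEmpVar}) of Theorem~\ref{theo:genericJointConvResult}, which states that the normalized vector of centered empirical variances at scales $\lowscale,\dots,\lowscale+\ell$ converges in law to $\calN(0,[\AVvarJoint{d}{\dj,\dj'}]_{0\le\dj,\dj'\le\ell})$. Because the deterministic weight vector $(w_0(n),\dots,w_\ell(n))$ converges to $(w_0,\dots,w_\ell)$, Slutsky's lemma applied to the inner product of a convergent deterministic vector with a weakly convergent random vector yields
\begin{equation*}
T_n^{(\ell)}\cl\calN(0,V_\ell),\qquad V_\ell\eqdef\sum_{\dj,\dj'=0}^{\ell}w_{\dj}\,\AVvarJoint{d}{\dj,\dj'}\,w_{\dj'}, \quad\text{as }n\to\infty.
\end{equation*}

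The main obstacle is the uniform (in $n$) control of the \emph{tail} $T_n-T_n^{(\ell)}$, which I would bound in $\mathrm{L}^1$ using Lemma~\ref{lem:BoundScalogram}. Taking $j=\lowscale+\dj$ in~(\ref{eq:VarScaloBothCases}) and multiplying by the prefactor $\sqrt{n2^{-\lowscale}}\,2^{-2\lowscale d}$, the first contribution collapses, since $\sqrt{n2^{-\lowscale}}\,n^{-1/2}=2^{-\lowscale/2}$ and $2^{-2\lowscale d}2^{(1/2+2d)\lowscale}=2^{\lowscale/2}$ cancel, leaving a bound $C\sum_{\dj>\ell}|w_{\dj}(n)|\,2^{(1/2+2d)\dj}$. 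The second contribution of~(\ref{eq:VarScaloBothCases}) carries the factor $(n2^{-\lowscale})^{1/2}2^{\lowscale(1-2\alpha-2d)}$, which tends to $0$ under~(\ref{eq:CondFstarUNBounded}), while $\sum_{\dj>\ell}|w_{\dj}(n)|2^{(1-2\alpha)\dj}$ is dominated by $\sum_{\dj>\ell}|w_{\dj}(n)|2^{(1/2+2d)\dj}$ because $1-2\alpha\le1/2+2d$ under~(\ref{eq:CondMalphadKsansBeta}). In the bounded case~(\ref{eq:CondFstarBounded}) I would instead invoke the variance bound~(\ref{eq:VarScaloBoundedCase}), giving $\PE[|\hvj{\lowscale+\dj}{n_{\lowscale+\dj}}-\vj{\lowscale+\dj}{d,f^\ast}{2}|]\le C\,2^{(1/2+2d)(\lowscale+\dj)}n^{-1/2}$, i.e.\ the first contribution alone. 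In both cases one obtains
\begin{equation*}
\limsup_{n\to\infty}\PE\bigl[\,|T_n-T_n^{(\ell)}|\,\bigr]\le C\sum_{\dj>\ell}\sup_{n\ge0}|w_{\dj}(n)|\,2^{(1/2+2d)\dj},
\end{equation*}
which tends to $0$ as $\ell\to\infty$ by the domination hypothesis~(\ref{eq:poidsDomination}); Markov's inequality then controls the tail in probability, uniformly in $n$.

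Finally, I would verify that $V_\ell\to V_\infty\eqdef\sum_{\dj,\dj'\ge0}w_{\dj}\AVvarJoint{d}{\dj,\dj'}w_{\dj'}$. Since $\AVvarJoint{d}{\dj,\dj'}$ is the covariance of the limiting Gaussian process, Cauchy--Schwarz gives $|\AVvarJoint{d}{\dj,\dj'}|\le(\AVvarJoint{d}{\dj,\dj})^{1/2}(\AVvarJoint{d}{\dj',\dj'})^{1/2}$, and~(\ref{eq:CovMatrix}) together with the boundedness of $\bdensasymp{0}{\lambda}{d}$ (as used in Lemma~\ref{lem:bDensBound}) yields $(\AVvarJoint{d}{\dj,\dj})^{1/2}\le C\,2^{(1/2+2d)\dj}$; hence the double series is absolutely summable, bounded by $C(\sum_{\dj\ge0}|w_{\dj}|2^{(1/2+2d)\dj})^2<\infty$, so $V_\ell\to V_\infty$. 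The conclusion~(\ref{eq:WeightedScalogramLimit}) then follows from the standard approximation lemma for weak convergence: if $T_n^{(\ell)}\cl\calN(0,V_\ell)$ for each $\ell$, if $V_\ell\to V_\infty$, and if $\lim_{\ell\to\infty}\limsup_{n\to\infty}\prob(|T_n-T_n^{(\ell)}|>\eta)=0$ for every $\eta>0$, then $T_n\cl\calN(0,V_\infty)$.
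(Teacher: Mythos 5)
Your proposal is correct and follows essentially the same route as the paper's proof: truncation at a fixed $\ell$ (legitimate since $\maxscale-\lowscale\to\infty$), the finite-dimensional CLT of Theorem~\ref{theo:genericJointConvResult} for the finite block, uniform $\mathrm{L}^1$ control of the tail via Lemma~\ref{lem:BoundScalogram} with exactly the same cancellation of the normalization against $2^{(1/2+2d)j}n^{-1/2}$ and the same use of~(\ref{eq:CondFstarUNBounded}) and of $1-2\alpha<1/2+2d$, absolute summability of the limiting covariances via the bound of Lemma~\ref{lem:bDensBound} (whose proof in the paper is precisely your Cauchy--Schwarz argument), and the approximation theorem \cite[Theorem~3.2]{billingsley:1999}. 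The only cosmetic difference is that you absorb the convergence $w_{\dj}(n)\to w_{\dj}$ into a Slutsky step for the finite block, whereas the paper compares $S_{n,\ell}$ with $\tilde{S}_{n,\ell}$ through the same $\mathrm{L}^1$ bound; both are valid.
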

\begin{proof}
We denote, for all $\ell\geq0$,
$$
S_{n,\ell} = \sqrt{n2^{-\lowscale}}2^{-2\lowscale d}\sum_{\dj=0}^{\ell}w_{\dj}(n)
\{\hvj{\lowscale+\dj}{n_{\lowscale+\dj}}-\vj{\lowscale+\dj}{d,f^\ast}{2}\} 
$$
and
$$
\tilde{S}_{n,\ell} = \sqrt{n2^{-\lowscale}}2^{-2\lowscale d}\sum_{\dj=0}^{\ell}w_{\dj}
\{\hvj{\lowscale+\dj}{n_{\lowscale+\dj}}-\vj{\lowscale+\dj}{d,f^\ast}{2}\} \;.
$$
Theorem~\ref{theo:genericJointConvResult} then gives that, for any $\ell\geq0$, as $n\to\infty$,
$$
\tilde{S}_{n,\ell} \cl
 \calN\left(0, \sum_{0\leq\dj,\dj'\leq\ell}w_{\dj}\AVvarJoint{d}{\dj,\dj'}w_{\dj'} \right) \; .
$$
Note that~(\ref{eq:poidsDomination}) implies $\sum_{\dj>\ell}|w_{\dj}|2^{(1/2+2d)\dj}\to0$ as $\ell\to\infty$, hence,
using Lemma~\ref{lem:bDensBound} and~(\ref{eq:CovMatrix}), we have
\begin{align*}
\sum_{\ell<\dj,\dj'}\left|w_{\dj}\AVvarJoint{d}{\dj,\dj'}w_{\dj}\right|
&\leq
C\;\sum_{\ell<\dj\leq\dj'}\left|w_{\dj}w_{\dj}\right|2^{4d\dj'+\dj}2^{(\dj'-\dj)(1/2-2d)}\\
&\leq C\; \sum_{\ell<\dj}|w_{\dj}|2^{(1/2+2d)\dj} \times \sum_{\ell<\dj'}|w_{\dj'}|2^{(1/2+2d)\dj'}\\
&\to0\quad\text{as $\ell\to\infty$.}  
\end{align*} 
The left-hand side of~(\ref{eq:WeightedScalogramLimit}) is $S_{n,J-\lowscale}$. We decompose it as 
$$
S_{n,\maxscale-\lowscale}=\left[S_{n,\maxscale-\lowscale}-S_{n,\ell}\right]+\left[S_{n,\ell}-\tilde{S}_{n,\ell}\right]+\tilde{S}_{n,\ell}
$$
From the last 3 displays and applying~\cite[Theorem~3.2]{billingsley:1999},  it is sufficient to prove that
\begin{equation}
  \label{eq:cltweightedRemainder}
  \lim_{\ell\to\infty}\limsup_{n\to\infty}\PE\left[\left|S_{n,\maxscale-\lowscale}-S_{n,\ell}\right|+\left|S_{n,\ell}-\tilde{S}_{n,\ell}\right|\right] =0\;.  
\end{equation}
To obtain this limit, we need to separate the case where Condition~(\ref{eq:CondFstarBounded}) holds from the one where it
is replaced by Condition~(\ref{eq:CondFstarUNBounded}). Under  Condition~(\ref{eq:CondFstarBounded}), we 
apply~(\ref{eq:VarScaloBoundedCase}); under~(\ref{eq:CondFstarUNBounded}), we apply~(\ref{eq:VarScaloBothCases}).
Let us for instance check the second case (the first one is similar, although simpler). The bound~(\ref{eq:VarScaloBothCases})
implies 
$$
\PE\left[\left|S_{n,\ell}-\tilde{S}_{n,\ell}\right|\right]\leq C\,\sum_{\dj=0}^\ell |w_{\dj}-w_{\dj}(n)| 
(2^{(1/2+2d)\dj}+2^{\lowscale(1-2\alpha-2d)}\sqrt{n2^{-\lowscale}}\;2^{(1-2\alpha)\dj})\;,
$$
which, using $w_{\dj}\to w_{\dj}(n)$ and~(\ref{eq:CondFstarUNBounded}),  tends to 0 as $n\to\infty$ for all $\ell\geq0$, and 
\begin{multline*}
\PE\left[\left|S_{n,\maxscale-\lowscale}-S_{n,\ell}\right|\right]\\
\leq 
C\,\left[\sum_{\dj>\ell}|w_{\dj}(n)|2^{(1/2+2d)\dj}+
2^{\lowscale(1-2\alpha-2d)}\sqrt{n2^{-\lowscale}}\;\sum_{\dj>\ell}|w_{\dj}(n)|2^{\dj(1-2\alpha)} \right] \\
\leq C\,\left[1+2^{\lowscale(1-2\alpha-2d)}\sqrt{n2^{-\lowscale}} \right] \sum_{\dj>\ell}|w_{\dj}(n)|2^{(1/2+2d)\dj}\; ,
\end{multline*}
(since $1-2\alpha<2d$ in Condition~(\ref{eq:CondMalphadKsansBeta})) which tends to 0 as $n\to\infty$ followed by
$\ell\to\infty$ by~(\ref{eq:CondFstarUNBounded})  and~(\ref{eq:poidsDomination}). This
yields~(\ref{eq:cltweightedRemainder}), which achieves the proof.  
\end{proof}

\section{The log-regression estimation of the memory parameter} \label{sec:log-regr-estim}

The wavelet-based regression estimator of the memory parameter $d$ involves regressing the scale spectrum
estimator $\hvj{j}{n_j}$, defined in~(\ref{eq:hvj}), with respect to the scale index $j$.
More precisely, an estimator of the memory parameter $d$ is obtained by
regressing the logarithm of the empirical variance $\log (\hvj{i}{n_i})$  for a finite number of scale indices
$j \in \{\lowscale, \dots, \lowscale+\ell\}$
where $\lowscale=\lowscale(n)\geq0$ is the lower scale and $1+\ell\geq2$ is the number of scales used in the regression.
For a sample size equal to $n$, this estimator is well defined for $\lowscale$ and $\ell$ such that $\ell\geq1$
and
\begin{equation}
\label{eq:def:Jn}
\lowscale+\ell \leq [\log_2 (n-\L+1)-\log_2(\L)]\;,
\end{equation}
where the right-hand side of this inequality is the maximal index $j$ such that $n_j\geq1$. The regression estimator can be
expressed formally as 
\begin{equation}
\label{eq:definition:estimator:regression}
\hat{d}_n(\lowscale,\regressweights) \eqdef \sum_{j=\lowscale}^{\lowscale+\ell} w_{j-\lowscale} \log \left( \hvj{j}{n_j} \right) \eqsp ,
\end{equation}
where the vector $\regressweights \eqdef[w_0,\dots,w_{\ell}]^T$  of weights satisfies
\begin{equation}
\label{eq:propertyw}
\sum_{i=0}^{\ell} w_{i}  = 0\quad\text{and}\quad 2 \log(2) \sum_{i=0}^{\ell} i w_{i}  = 1 \eqsp.
\end{equation}
One may choose, for example, $\regressweights$ corresponding to the weighted least-squares regression vector, defined by
\[
\regressweights= D B (B^TDB)^{-1} \mathbf{b}  \eqsp,
\]
where
$B \eqdef \left[\begin{matrix}
1 & 1 & \dots & 1 \\
0 & 2 & \dots & \ell
\end{matrix}\right]^T$ is the so-called design matrix, $D$ is a definite positive matrix and
\begin{equation}\label{eq:bfbDef}
\mathbf{b}\eqdef [0 \,\, (2\log(2))^{-1}]^T.
\end{equation}
Ordinary least square regression corresponds to the case where $D$ is the identity matrix.

In \cite{moulines-roueff-taqqu-2007a}, the process $X$ was assumed Gaussian and a bound for the mean square error and an
asymptotic equivalent to the variance of $\hat{d}_n(\lowscale,\regressweights)$ were obtained.
The asymptotic normality is established in~\cite{moulines-roueff-taqqu-2007b}, also under the Gaussian assumption. 
Here we show that the asymptotic normality holds under the weaker linear assumption. 

\begin{theorem}
\label{theo:asympvariance}
Let $X$ be an $M(d)$ process with short-range spectral density $f^\ast$ 
and suppose that Assumptions~\ref{H:Lbeta} and~\ref{assump:linear} hold.  
Under \allWA with
\begin{equation}
  \label{eq:CondMalphadKavecBeta}
   (1+\beta)/2-\alpha< d\leq M\quad\text{and}\quad K\leq M \;,
\end{equation}
 if, as $n\to\infty$, $\lowscale(n)$ is such that  
\begin{equation}
\label{eq:mAndnconditionForCLT}
 (n2^{-\lowscale(n)})^{-1} + n2^{-(1+2\beta)\lowscale(n)}\to0\; ,
\end{equation}
then one has the following central limit:
\begin{equation}
\label{eq:AbryVeitchEstimatorCLTGaussian}
\sqrt{n2^{-\lowscale(n)}} \left(\hat{d}_n(\lowscale,\regressweights) - d \right) \cl \calN\left(0,\regressweights^T \AVvar{d}{}
  \regressweights\right) \eqsp, 
\end{equation}
where 
\begin{equation}\label{eq:Kpsi}
\Kvar[\psi]{d}\eqdef \int_{-\infty}^{\infty} |\xi|^{-2d}\,|\hat\psi(\xi)|^2\,\rmd\xi \eqsp,
\end{equation}
and $\AVvar[\psi]{d}{}$ is the $(1+\ell)\times(1+\ell)$ matrix defined as
\begin{align}
\label{eq:definitionAVvar}
\AVvar[\psi]{d}{i,j} \eqdef \frac{4 \pi 2^{2d |j-i| } 2^{i \wedge j}}{\Kvar[\psi]{d}^2} \int_{-\pi}^{\pi} \left|
  \bdensasymp[\psi]{|j-i|}{\lambda}{d} \right|^2 \, \rmd\lambda  && 0 \leq i, j \leq \ell \eqsp ,
\end{align}
\end{theorem}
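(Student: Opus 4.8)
The plan is to linearise $\hat{d}_n(\lowscale,\regressweights)$ around the deterministic scalogram and to identify the leading stochastic term as a weighted, centred empirical scalogram to which Theorem~\ref{thm:weakConvWeightedScalogramLimit} applies; the bias is absorbed using Assumption~\ref{H:Lbeta} and condition~\eqref{eq:mAndnconditionForCLT}. First I would record the deterministic approximation: the spectral analysis of~\cite{moulines-roueff-taqqu-2007a} gives, under Assumption~\ref{H:Lbeta} and~\eqref{eq:CondMalphadKavecBeta}, a constant $C$ such that
\begin{equation*}
\log\vj{j}{d,f^\ast}{2}=\log\bigl(f^\ast(0)\,\Kvar{d}\bigr)+2d\log(2)\,j+r_j,\qquad |r_j|\le C\,2^{-\beta j}.
\end{equation*}
Writing $\dj=j-\lowscale$ and using the normalisation~\eqref{eq:propertyw}, namely $\sum_{\dj}w_{\dj}=0$ and $2\log(2)\sum_{\dj}\dj\,w_{\dj}=1$, the constant and slope terms combine to give $\sum_{\dj=0}^{\ell}w_{\dj}\log\vj{\lowscale+\dj}{d,f^\ast}{2}=d+b_n$, where $b_n=\sum_{\dj}w_{\dj}r_{\lowscale+\dj}$ obeys $|b_n|\le C\,2^{-\beta\lowscale}$. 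Hence $\sqrt{n2^{-\lowscale}}\,|b_n|\le C\,(n2^{-(1+2\beta)\lowscale})^{1/2}\to0$ by~\eqref{eq:mAndnconditionForCLT}; this is the only place where Assumption~\ref{H:Lbeta} and the sharper lower bound on $d$ in~\eqref{eq:CondMalphadKavecBeta} intervene, and it is the step I expect to require the most care.

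Next, setting $u_j\eqdef(\hvj{j}{n_j}-\vj{j}{d,f^\ast}{2})/\vj{j}{d,f^\ast}{2}$ and $g(u)\eqdef\log(1+u)-u$, I would use $\log\hvj{j}{n_j}=\log\vj{j}{d,f^\ast}{2}+u_j+g(u_j)$ to decompose
\begin{equation*}
\hat{d}_n(\lowscale,\regressweights)-d=b_n+\sum_{\dj=0}^{\ell}w_{\dj}\,u_{\lowscale+\dj}+\sum_{\dj=0}^{\ell}w_{\dj}\,g(u_{\lowscale+\dj}).
\end{equation*}
The central term is handled by Theorem~\ref{thm:weakConvWeightedScalogramLimit} applied to the array $w_{\dj}(n)\eqdef w_{\dj}\,2^{2\lowscale d}/\vj{\lowscale+\dj}{d,f^\ast}{2}$ for $0\le\dj\le\ell$ and $w_{\dj}(n)\eqdef0$ otherwise, since then $\sqrt{n2^{-\lowscale}}\sum_{\dj}w_{\dj}u_{\lowscale+\dj}=\sqrt{n2^{-\lowscale}}\,2^{-2\lowscale d}\sum_{\dj}w_{\dj}(n)(\hvj{\lowscale+\dj}{n_{\lowscale+\dj}}-\vj{\lowscale+\dj}{d,f^\ast}{2})$. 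The deterministic approximation gives $w_{\dj}(n)\to w_{\dj}^\infty\eqdef w_{\dj}/(f^\ast(0)\Kvar{d}2^{2d\dj})$; condition~\eqref{eq:poidsDomination} is trivial because only finitely many weights are nonzero; and one checks that~\eqref{eq:CondMalphadKavecBeta} together with~\eqref{eq:mAndnconditionForCLT} forces $\lowscale\to\infty$, $n2^{-\lowscale}\to\infty$ and~\eqref{eq:CondFstarUNBounded} (indeed $2^{\lowscale(1-2\alpha-2d)}\le 2^{-\beta\lowscale}$), so the hypotheses of Theorem~\ref{thm:weakConvWeightedScalogramLimit} hold whether or not $f^\ast$ is bounded. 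The resulting limit variance is $\sum_{\dj,\dj'}w_{\dj}^\infty\AVvarJoint{d}{\dj,\dj'}w_{\dj'}^\infty$, and inserting~\eqref{eq:CovMatrix} and simplifying via $4d(\dj\vee\dj')-2d(\dj+\dj')=2d|\dj-\dj'|$ collapses each term to $w_{\dj}\AVvar{d}{\dj,\dj'}w_{\dj'}$ with $\AVvar{d}{\dj,\dj'}$ exactly as in~\eqref{eq:definitionAVvar}; that is, the variance equals $\regressweights^T\AVvar{d}{}\regressweights$.

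Finally I would dispose of the remainder $\sum_{\dj}w_{\dj}g(u_{\lowscale+\dj})$. The joint convergence provided by Theorem~\ref{theo:genericJointConvResult} shows that $\sqrt{n2^{-\lowscale}}\,u_{\lowscale+\dj}=O_{\prob}(1)$ for each of the finitely many $\dj$, whence $u_{\lowscale+\dj}\cp0$; in particular $\hvj{\lowscale+\dj}{n_{\lowscale+\dj}}>0$ with probability tending to one, so the logarithm is well defined. Using $g(u)=O(u^2)$ near the origin and $n2^{-\lowscale}\to\infty$ then gives $\sqrt{n2^{-\lowscale}}\,g(u_{\lowscale+\dj})=O_{\prob}((n2^{-\lowscale})^{-1/2})\cp0$. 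Combining the vanishing bias, the Gaussian limit of the linear term, and this negligible remainder through Slutsky's lemma yields~\eqref{eq:AbryVeitchEstimatorCLTGaussian}. The argument is thus a delta-method assembly of the deterministic scalogram approximation with the scalogram central limit theorem of Section~\ref{sec:quadr-forms-wavel}; the one genuinely delicate point is matching the bias rate $2^{-\beta\lowscale}$ against the bandwidth condition~\eqref{eq:mAndnconditionForCLT}.
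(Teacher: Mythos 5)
Your proposal is correct and follows essentially the same route as the paper: the paper also verifies the hypotheses of Theorem~\ref{theo:genericJointConvResult} (with the same deduction of~\eqref{eq:CondFstarUNBounded} from~\eqref{eq:CondMalphadKavecBeta} and~\eqref{eq:mAndnconditionForCLT}), invokes the deterministic approximation~\eqref{eq:VjApprox}, and then cites Relation~(39) and Proposition~3 of \cite{moulines-roueff-taqqu-2007b} for exactly the delta-method assembly --- bias control via $2^{-\beta\lowscale}$, linearization of the logarithm, and negligibility of the quadratic remainder --- that you write out explicitly. Your variance identification $4d(\dj\vee\dj')-2d(\dj+\dj')=2d|\dj-\dj'|$ correctly reduces~\eqref{eq:CovMatrix} to~\eqref{eq:definitionAVvar}, so the argument is complete.
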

\begin{proof}
The only assumptions of Theorem~\ref{theo:genericJointConvResult} that are not included in our set of assumptions here are \eqref{eq:CondFstarBounded} and
\eqref{eq:CondFstarUNBounded}. The former is verified if $\varepsilon=\pi$ in Assumption~\ref{H:Lbeta}. If $\varepsilon<\pi$, 
\eqref{eq:CondFstarUNBounded} holds as a consequence of~(\ref{eq:CondMalphadKavecBeta}) and~\eqref{eq:mAndnconditionForCLT}. Hence 
Theorem~\ref{theo:genericJointConvResult} applies (note that~(\ref{eq:CondMalphadKavecBeta})
implies~(\ref{eq:CondMalphadKsansBeta}) since $\beta>0$). 
Applying~\cite[Theorem~1]{moulines-roueff-taqqu-2007c}, under \allWA, we have the following approximation:
\begin{equation}\label{eq:VjApprox}
\left| \vj{j}{f}2 - f^\ast(0) \, \Kvar[\psi]{d} \, 2^{2jd} \right| \leq C\, f^\ast(0) \, L \, 2^{(2d-\beta)j}
\end{equation}
where $\vj{j}{f}2$ is defined in~(\ref{eq:definitionvj}) and $\Kvar[\psi]{d}$ in~(\ref{eq:Kpsi}).
This, with~Theorem~\ref{theo:genericJointConvResult}, ~\cite[Relation~(39)]{moulines-roueff-taqqu-2007b}
and~\cite[Proposition~3]{moulines-roueff-taqqu-2007b}, gives the result. 
\end{proof}
\begin{remark}
  The centering in~(\ref{eq:ScalogramLimit}) in Theorem~\ref{theo:genericJointConvResult} involved the expected value whereas
  the centering in~(\ref{eq:AbryVeitchEstimatorCLTGaussian}) in Theorem~\ref{theo:asympvariance} does not. In order to deal
  with the corresponding bias, Condition~(\ref{eq:CondMalphadKsansBeta}) is strengthened by~(\ref{eq:CondMalphadKavecBeta}).
\end{remark}

\section{The wavelet Whittle estimation of the memory parameter}\label{sec:wavel-whittle-estim}

We now consider the semi-parametric estimator introduced in~\cite{moulines-roueff-taqqu-2007c}.
As the log-regression estimation, this estimator is also based on the scalogram but is defined as the maximizer of a
Whittle type contrast function (see~\cite[Eq.~(20)]{moulines-roueff-taqqu-2007c}), 
$$
\tilde{d}_n(\lowscale,\upscale)\eqdef
\argmin_{d'\in\Rset} \left[\log\left(\sum_{j=\lowscale}^{\upscale}2^{-2d'j}n_j  \hvj{j}{n_j} \right) 
+2 d' \log(2) \jmean \right] \quad\text{with}\quad\jmean\eqdef \frac{\sum_{j=\lowscale}^{\upscale}j\;n_j}{\sum_{j=\lowscale}^{\upscale}n_j} \; .
$$
$\tilde{d}_n$, which involves the scales $\lowscale\leq j\leq \upscale$,
is a wavelet analog of the local Whittle Fourier estimator studied in \cite{robinson:1995g} (often referred to as
\emph{semiparametric Gaussian estimator}) and is therefore called the \emph{local Whittle wavelet} estimator. To prove the
asymptotic normality of $\tilde{d}_n$, we will use Theorem~\ref{thm:weakConvWeightedScalogramLimit}.

We denote, for all integer $\k\geq1$,
\begin{gather}
\label{eq:eta-k}
\eta_\k\eqdef   \sum_{j=0}^\k  j \frac{2^{-j}}{2-2^{-\k}}
\quad\text{and}\quad
\kappa_\k\eqdef\sum_{j=0}^{\k} (j-\eta_{\k})^2 \frac{2^{-j}}{2-2^{-\k}}\;, \\
\nonumber
\AsympVarWWE[\psi]{d,\k} \eqdef \frac{\pi}{(2-2^{-\k})\kappa_\k(\log(2)\Kvar[\psi]{d})^2} \times \hspace{6cm} \\
\label{eq:varLimite-k}
\hspace{1cm}\left\{\intbdens[\psi]{0}{d} +
 \frac2{\kappa_\k}
\sum_{\dj=1}^\k \intbdens[\psi]{\dj}{d} \, 2^{(2d-1) \dj}\,\sum_{i=0}^{\k-\dj}\frac{2^{-i}}{2-2^{-\k}}(i-\eta_\k)(i+\dj-\eta_\k) \right\}\eqsp , \\
\label{eq:varLimite}
\AsympVarWWE[\psi]{d,\infty} \eqdef
\frac{\pi}{[2\log(2)\Kvar[\psi]{d}]^2} \left\{\intbdens[\psi]{0}{d} +
2\sum_{\dj=1}^\infty \intbdens[\psi]{\dj}{d} \, 2^{(2d-1) \dj} \right\} \eqsp ,
\end{gather}
where $\Kvar[\psi]{d}$ is defined in \eqref{eq:Kpsi}.

\begin{theorem}
\label{theo:CLTwhittle}
Let $X$ be an $M(d)$ process with short-range spectral density $f^\ast$ 
and suppose that Assumptions~\ref{H:Lbeta} and~\ref{assump:linear} hold.  
Under \allWA\ with Condition~(\ref{eq:CondMalphadKavecBeta}) on $\alpha$ and $M$,
if, as $n\to\infty$, the lower scale $\lowscale(n)$ is such that  
\begin{equation}
\label{eq:mAndnconditionForCLTWhittle}
\lowscale(n) (n2^{-\lowscale(n)})^{-1/8}+ n2^{-(1+2\beta)\lowscale(n)}\to0\; ,
\end{equation}
and the upper scale $\upscale(n)$ is such that
$$
\upscale(n)-\lowscale(n)\to \ell \in\{1,2\dots,\infty\},
$$
then one has the following central limit:
\begin{equation}
\label{eq:WhittleEstimatorCLTGaussian}
\sqrt{n2^{-\lowscale(n)}} \left(\tilde{d}_n(\lowscale(n),\upscale(n)) - d \right) \cl \calN\left(0,\AsympVarWWE[\psi]{d,\ell}\right) \eqsp,
\end{equation}
where $\AsympVarWWE[\psi]{d,\ell}$ for $\ell<\infty$ and $\ell=\infty$ are defined in~(\ref{eq:varLimite-k})
and~(\ref{eq:varLimite}) respectively.
\end{theorem}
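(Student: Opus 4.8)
The plan is to treat $\tilde{d}_n$ as a minimum-contrast estimator and follow the scheme used for the Gaussian case in \cite{moulines-roueff-taqqu-2007c}, the only genuinely new ingredient being that the Gaussian central limit theorem for the scalogram is replaced by Theorem~\ref{thm:weakConvWeightedScalogramLimit}. Write the contrast as $\calC_n(d') = \log\left(\sum_{j=\lowscale}^{\upscale}2^{-2d'j}n_j\hvj{j}{n_j}\right) + 2d'\log(2)\jmean$. Since $\tilde{d}_n$ is an interior minimizer, the first-order condition $\calC_n'(\tilde{d}_n)=0$ holds, and a mean-value expansion gives $\sqrt{n2^{-\lowscale}}(\tilde{d}_n-d) = -\sqrt{n2^{-\lowscale}}\calC_n'(d)/\calC_n''(\bar{d}_n)$ for some $\bar{d}_n$ between $\tilde{d}_n$ and $d$. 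First I would establish consistency $\tilde{d}_n\cp d$ (hence $\bar{d}_n\cp d$), which in the present linear setting can be obtained from the first-moment bound of Lemma~\ref{lem:BoundScalogram} together with the approximation~\eqref{eq:VjApprox}; the comparatively strong first part of~\eqref{eq:mAndnconditionForCLTWhittle} is what is needed to make the crude moment bounds available here deliver both consistency and a uniform control of $\calC_n''$ in a shrinking neighbourhood of $d$. The remaining work then splits into analysing the score $\sqrt{n2^{-\lowscale}}\calC_n'(d)$ (asymptotically Gaussian) and the Hessian $\calC_n''(\bar{d}_n)$ (asymptotically deterministic), and concluding by Slutsky's lemma.

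For the score, a direct computation gives $\calC_n'(d) = -2\log(2)\,\frac{\sum_j(j-\jmean)n_j2^{-2dj}\hvj{j}{n_j}}{\sum_j n_j2^{-2dj}\hvj{j}{n_j}}$, and I would split the numerator using $\hvj{j}{n_j} = \vj{j}{d,f^\ast}{2} + (\hvj{j}{n_j}-\vj{j}{d,f^\ast}{2})$. By~\eqref{eq:VjApprox} the leading term of $2^{-2dj}\vj{j}{d,f^\ast}{2}$, namely $f^\ast(0)\Kvar{d}$, is annihilated by the identity $\sum_j(j-\jmean)n_j=0$, leaving a bias in the normalized score of order $(n2^{-(1+2\beta)\lowscale})^{1/2}$ that vanishes by the second part of~\eqref{eq:mAndnconditionForCLTWhittle}. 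The fluctuation contribution is, after factoring $2^{-2d\lowscale}$ and $n2^{-\lowscale}$, exactly of the form treated by Theorem~\ref{thm:weakConvWeightedScalogramLimit} with weights $w_\dj(n) = (\lowscale+\dj-\jmean)\,(n_{\lowscale+\dj}/(n2^{-\lowscale}))\,2^{-2d\dj}$; since $n_{\lowscale+\dj}/(n2^{-\lowscale})\to2^{-\dj}$ and $\jmean-\lowscale\to\eta_\ell$, one has $w_\dj(n)\to(\dj-\eta_\ell)2^{-(1+2d)\dj}$, and the bound $\sup_n|w_\dj(n)|2^{(1/2+2d)\dj}\lesssim|\dj-\eta_\ell|2^{-\dj/2}$ (summable in $\dj$) verifies the domination condition~\eqref{eq:poidsDomination}. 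The denominator, normalised by $n2^{-\lowscale}$, converges in probability to $f^\ast(0)\Kvar{d}(2-2^{-\ell})$. Combining these by Slutsky yields a centred Gaussian limit for $\sqrt{n2^{-\lowscale}}\calC_n'(d)$ whose variance is built from $\{\AVvarJoint{d}{\dj,\dj'}\}$ and the limiting weights.

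For the Hessian, the same differentiation shows that $\calC_n''(d')$ equals $(2\log(2))^2$ times the variance of the scale index $j$ under the probability weights proportional to $n_j2^{-2d'j}\hvj{j}{n_j}$. Evaluating at $d'=\bar{d}_n$, using $\bar{d}_n\cp d$, the uniform control mentioned above, and replacing $2^{-2dj}\hvj{j}{n_j}$ by its probability limit $f^\ast(0)\Kvar{d}$ (constant in $j$), the weights become proportional to $n_j\sim n2^{-\lowscale}2^{-\dj}$ and I would deduce $\calC_n''(\bar{d}_n)\cp(2\log(2))^2\kappa_\ell$. Dividing the Gaussian score limit by this constant and substituting~\eqref{eq:CovMatrix} for $\AVvarJoint{d}{\dj,\dj'}$ then gives, after the cancellation of $f^\ast(0)$ and a reorganisation of the double sum over $(\dj,\dj')$ into a diagonal term $\intbdens{0}{d}$ and off-diagonal terms indexed by the gap $|\dj-\dj'|$, precisely the variance $\AsympVarWWE{d,\ell}$ of~\eqref{eq:varLimite-k}; the case $\ell=\infty$ follows by passing to the limit, using $\eta_\infty=1$, $\kappa_\infty=2$ and the identity $\sum_{i\ge0}(i-1)(i+m-1)2^{-i}=4$ for every $m$, which collapses the off-diagonal weights to the coefficient $2$ and reproduces~\eqref{eq:varLimite}.

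I expect the main obstacle to be the case $\ell=\infty$, where the contrast, the score and the Hessian all involve an unboundedly growing set of scales and one must control tail contributions uniformly in $n$: the score tail is handled by~\eqref{eq:poidsDomination} as above, but proving consistency and the convergence of $\calC_n''$ requires uniform estimates over $j\in\{\lowscale,\dots,\upscale\}$ that rely only on the first-moment bound~\eqref{eq:VarScaloBothCases} rather than on Gaussianity, which is exactly why the strong first condition in~\eqref{eq:mAndnconditionForCLTWhittle} is imposed. By contrast, the explicit matching of the limiting variance to~\eqref{eq:varLimite-k}--\eqref{eq:varLimite} is a bookkeeping computation rather than a conceptual difficulty.
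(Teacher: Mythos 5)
Your proposal is correct and follows essentially the same route as the paper: the paper imports the reduction to the score statistic verbatim from the Gaussian-case proof of \cite[Theorem~5]{moulines-roueff-taqqu-2007c} up to its Eq.~(66) (which is exactly the mean-value/minimum-contrast expansion you re-derive, with the same denominator $2\log(2)\Kvar[\psi]{d}(2-2^{-(\upscale-\lowscale)})\kappa_{\upscale-\lowscale}$), kills the bias of $\Sclt_n$ via $\sum_j(j-\jmean)n_j=0$, \eqref{eq:VjApprox} and the second part of~\eqref{eq:mAndnconditionForCLTWhittle}, and then applies Theorem~\ref{thm:weakConvWeightedScalogramLimit} with precisely your weights $w_{\dj}(n)=[\dj-(\jmean-\lowscale)]2^{-2d\dj}n_{\lowscale+\dj}/(n2^{-\lowscale})$ and the same domination bound $\sup_n|w_{\dj}(n)|\leq C\,\dj\,2^{-(1+2d)\dj}$. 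Your explicit consistency/Hessian analysis and the $\ell=\infty$ bookkeeping (using $\eta_\infty=1$, $\kappa_\infty=2$ and $\sum_{i\geq0}(i-1)(i+m-1)2^{-i}=4$) simply unpack steps the paper delegates to the cited reference, and they check out.
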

\begin{remark}
Condition~(\ref{eq:mAndnconditionForCLTWhittle}) is similar to~(\ref{eq:mAndnconditionForCLT}) but
$n2^{-\lowscale(n)}\to\infty$ is replaced by the stronger condition $\lowscale(n) (n2^{-\lowscale(n)})^{-1/8}\to0$, which holds for example, if
$n2^{-\lowscale(n)}/ n^\gamma$ has a positive limit for some $\gamma>0$ as usually verified.
\end{remark}
\begin{proof}
Assume $f^\ast(0)=1$ without loss of generality.
  The proof is the same as that of~\cite[Theorem~5]{moulines-roueff-taqqu-2007c} until Eq.~(66),
  \begin{equation}
    \label{eq:66-MRT2007c}
    (n2^{-\lowscale})^{1/2}\,(\hd_n-d) =
    \frac{(n2^{-\lowscale})^{-1/2}\,\Sclt_n}{2\,\log(2)\,\Kvar[\psi]{d}
\,(2-2^{-(\upscale-\lowscale)})
      \,\kappa_{\upscale-\lowscale}} \,(1+o_{\prob}(1))\eqsp,     
\end{equation}
where
$$
\Sclt_n \eqdef\sum_{j=\lowscale}^\upscale [j-\jmean] \, 2^{-2j d }\, n_j  \hvj{j}{n_j}  \eqsp .
$$  
Thus, using $\sum_{j=\lowscale}^\upscale [j-\jmean]\, n_j =0$ and~(\ref{eq:definitionvj}), we get
$$
\PE \left[ \Sclt_n \right]= 
\sum_{j=\lowscale}^\upscale [j-\jmean] \, n_j \,\left(2^{-2j d }\,\vj{j}{f}{2}-f^\ast(0) \, \Kvar[\psi]{d}\right) = O( n2^{-(1+\beta) \lowscale})
= o\left ( (n2^{-\lowscale})^{1/2} \right)\eqsp,
$$
where the $O$-term follows from~(\ref{eq:VjApprox}), the fact that $\lowscale< \jmean<\lowscale+1$
(see~\cite[Eq~(61)]{moulines-roueff-taqqu-2007c}) and $n_j\leq n2^{-j}$ and the $o$-term follows from~(\ref{eq:mAndnconditionForCLTWhittle}).

Hence it only remains to establish a CLT for $\Sclt_n$ similar to that of \cite[Proposition~10]{moulines-roueff-taqqu-2007c} but
under the assumptions of Theorem~\ref{theo:CLTwhittle}. This is obtained by observing that
$$
(n2^{-\lowscale})^{-1/2}\,\Sclt_n=(n2^{-\lowscale})^{1/2}\,2^{-2\lowscale d}\sum_{\dj=0}^{\upscale-\lowscale} w_{\dj}(n)  \hvj{j}{n_j}  
$$
with $\dj=j-\lowscale$ and
$$
w_{\dj}(n)\eqdef [\dj- (\jmean-\lowscale)] \, 2^{-2\dj d }\, \frac{n_{\lowscale+\dj}}{n2^{-\lowscale}},\quad\dj\in
\{0,\dots,\upscale-\lowscale\}\;, 
$$
which satisfies
$$
\sup_{n}|w_{\dj}(n)|\leq C \; \dj 2^{-(2 d+1)\dj },\quad \dj\in\{0,\dots,\upscale-\lowscale\} \;,
$$
and by applying Theorem~\ref{thm:weakConvWeightedScalogramLimit}.
\end{proof}
\appendix

\section{Wavelet coefficients linear filters}

Assumption~\ref{item:Wreg} implies that $\hat{\phi}$ and $\hat{\psi}$ are everywhere infinitely differentiable.
When~\ref{item:Wreg}  holds, Assumptions~\ref{item:MVM} and~\ref{item:MIM} can be expressed in different ways. 
\ref{item:MVM}
is equivalent to asserting that the first $M-1$ derivative of $\hat{\psi}$ vanish at
the origin and hence
\begin{equation}
\label{eq:MVM}
|\hat{\psi}(\lambda)|=O(|\lambda|^{M})\quad\text{as}\quad\lambda\to0.
\end{equation}
And, by \cite[Theorem~2.8.1, Page~90]{cohen:2003},~\ref{item:MIM} is equivalent to
\begin{equation}
\label{eq:MIM}
\sup_{k\neq0} |\hat{\phi}(\lambda+2k\pi)|=O(|\lambda|^{M})\quad\text{as}\quad\lambda\to0.
\end{equation}

Many authors suppose that the $\psi_{j,k}$ are orthogonal and even that they are generated by a multiresolution analysis (MRA).
Assumptions~\ref{item:Wreg}--\ref{item:MIM} in Section~\ref{sec:quadr-forms-wavel} are satisfied in these cases, $\phi$ being the scaling
function and $\psi$ is the associated wavelet.
In this paper, however, we do not assume that wavelets are orthonormal nor
that they are associated to a multiresolution analysis. We may therefore work with other convenient choices for $\phi$ and $\psi$
as long as~\ref{item:Wreg}--\ref{item:MIM} are satisfied.  A simple example is to set, for some positive integer $N$,
$$
\phi(x) \eqdef \1_{[0,1]}^{\otimes N}(x)\quad\text{and}\quad\psi(x) \eqdef C_N\;\frac{\rmd^N}{\rmd x^N} \1_{[0,1]}^{\otimes 2N}(x),
$$
where $\1_A$ is the indicator function of the set $A$, $f^{\otimes N}$ denotes the $N$-th  self-convolution of
a function $f$ and $C_N$ is a normalizing constant such that $\int_{-\infty}^\infty\psi^2(x)\rmd x=1$. It follows that
$$
|\hat\phi(\xi)|=|2\sin(\xi/2)/\xi|^{N}\quad\text{and}\quad|\hat\psi(\xi)|=C_N\;|\xi|^{N}|2\sin(\xi/2)/\xi|^{2N}.
$$
Using~(\ref{eq:MVM}) and~(\ref{eq:MIM}), one easily checks that~\allWA\ are satisfied with $M$ and $\alpha$ equal to $N$.
Of course the  family of functions $\{\psi_{j,k}\}$ are not orthonormal for this choice of the wavelet function $\psi$ (and the function
$\phi$ is not associated to a MRA).

\bibliography{lrd}
\bibliographystyle{plain}

\end{document}